\documentclass[10pt]{article}
\usepackage{amsfonts, epsfig, amsmath, amssymb,amsthm, color,mathabx}
\usepackage{mathrsfs}

\usepackage[english]{babel}

\usepackage[round]{natbib}   

%

\textwidth 16.5cm \textheight 23cm 
\oddsidemargin 0mm
\evensidemargin -4.5mm
\topmargin -10mm

\parindent 0.5cm



\newcommand{\E}{\mathbf{E}}
\renewcommand{\P}{\mathbf{P}}

\renewcommand {\epsilon}{\varepsilon}

\newtheorem{thm}{Theorem}[section]

\newtheorem{lem}[thm]{Lemma}
\theoremstyle{definition}
\newtheorem{dfn}[thm]{Definition}

\newtheorem{rem}[thm]{Remark}
\newtheorem{exa}[thm]{Example}

\DeclareMathSymbol{\ophi}{\mathalpha}{letters}{"1E}

\newcommand{\e}{\varepsilon}

\renewcommand{\phi}{\varphi}

\newcommand{\be}{\begin{equation}}
\newcommand{\ee}{\end{equation}}
\newcommand{\ben}{\begin{equation*}}
\newcommand{\een}{\end{equation*}}

\newcommand{\ba}{\begin{equation}\begin{aligned}}
\newcommand{\ea}{\end{aligned}\end{equation}}
\newcommand{\ban}{\begin{equation*}\begin{aligned}}
\newcommand{\ean}{\end{aligned}\end{equation*}}

\DeclareMathOperator{\sign}{sign}

\newcommand{\ex}{\mathrm{e}}
\newcommand{\di}{\mathrm{d}}

\newcommand{\rF}{\mathscr{F}}

\newcommand{\bF}{\mathbb{F}}

\newcommand{\bI}{\mathbb{I}}

\newcommand{\bR}{\mathbb{R}}

\newfont{\cyrfnt}{wncyr10}
\def\J3{\cyrfnt{\rm \u{\cyrfnt I}}}
\def\j3{\cyrfnt{\rm \u{\cyrfnt i}}}

\allowdisplaybreaks[4]

\numberwithin{equation}{section}

\begin{document}
\title{Stochastic selection problem for a Stratonovich SDE with power non-linearity}


 
\author{
Ilya Pavlyukevich\footnote{Institut f\"ur Stochastik, Friedrich--Schiller--Universit\"at Jena, Ernst--Abbe--Platz 2, 
07743 Jena, Germany; ilya.pavlyukevich@uni-jena.de}\quad 
and\quad Georgiy Shevchenko\footnote{Kyiv School of Economics, Mykoly Shpaka 3, Kyiv 03113, Ukraine; gshevchenko@kse.org.ua}
}

\maketitle

\begin{abstract}
In our paper [Bernoulli 26(2), 2020, 1381--1409], we found all strong Markov solutions that spend zero time at $0$
of the Stratonovich stochastic differential equation $\di X=|X|^{\alpha}\circ\di B$, $\alpha\in (0,1)$.
These solutions have the 
form $X_t^\theta=F(B^\theta_t)$, where $F(x)=\frac{1}{1-\alpha}|x|^{1/(1-\alpha)}\operatorname{sign} x$ and $B^\theta$ is the skew Brownian motion
with skewness parameter $\theta\in [-1,1]$
starting at $F^{-1}(X_0)$. In this paper we show how an addition of small external additive noise $\e W$ restores uniqueness. 
In the limit as $\e\to 0$, we recover heterogeneous diffusion corresponding to the physically symmetric case $\theta=0$.
\end{abstract}

\noindent
\textbf{Keywords:} Generalized It\^o's formula, heterogeneous diffusion process,
local time,
non-uniqueness,
selection problem,
singular stochastic differential equation, 
skew Brownian motion,
Stratonovich integral. 

\smallskip

\textbf{MSC 2010 subject classification:} Primary 60H10; secondary 60J55, 60J60.
 
\maketitle

\section{Introduction}

In \cite{CherstvyCM-13}, the authors considered the so-called \emph{heterogeneous} diffusion process defined as a solution of the 
Stratonovich stochastic differential equation
\begin{equation}
\label{e:mainn}
X_t=x+\int_0^t |X_s|^\alpha\circ \di B_s
\end{equation}
with $\alpha\in\bR$ and $B$ being a standard Brownian motion. It is always assumed that 
$|x|^\alpha=|x|^\alpha\cdot \bI(x\neq 0)$, so that $|0|^\alpha=0$ for any $\alpha\in \bR$. 
This equation can be seen as a Stratonovich version of the famous diffusion 
\ba
\label{e:girsanov}
X_t^G=x+\int_0^t |X_s^G|^\alpha\, \di B_s
\ea
firstly studied by \cite{Girsanov-62}. It is well-known that equation \eqref{e:girsanov} 
has a unique strong solution for $\alpha\geq 1/2$ and has infinitely many solutions for $\alpha\in (0,1/2)$. A complete analysis 
of equation \eqref{e:girsanov}  can be found in 
Chapter 5 from 
\cite{CheEng05}.

It is clear that in the Stratonovich setting, the presence of the irregular point $\{0\}$ also affects the 
existence and uniqueness of solutions of \eqref{e:mainn}.

The physicists' approach to \eqref{e:mainn} exploited in the papers by \cite{CherstvyCM-13} and 
\cite{sandev2022heterogeneous} is the solution by regularization. Indeed, let $\alpha\in (0,1)$ and let $\{\sigma_\e\}$ be a family 
of smooth functions with integrable derivatives such that $\sigma_\e(x)>0$ and 
$|\sigma_\e(x)-|x|^\alpha|\to 0$ uniformly on compacts as $\e\to 0$. 
Consider a \emph{regularized equation} $\di Z^\e_t=\sigma_\e(Z^\e_t)\circ \di B_t$ 
that can be equivalently written in the It\^o form as
\ba
\label{e:Xsigma}
\di Z^\e_t=\sigma_\e(Z^\e_t)\, \di B_t  + \frac12 \sigma_\e(Z^\e_t) \sigma'_\e(Z^\e_t)\,\di t,\quad Z^\e_0=x.
\ea
There exists a unique strong solution $Z^\e$ to \eqref{e:Xsigma} that can be found explicitly 
with the help of the well-known Lamperti method. Indeed, let 
\ba
F_\e(x):=\int_0^x \frac{\di y}{\sigma_\e(y)},\quad x\in\bR. 
\ea
The function $F_\e$ is monotonically increasing and $C^1$-smooth with locally integrable second derivative.
The It\^o formula with generalized derivatives from \cite[Chapter 2, Section 10]{krylov2008controlled}
is applicable and yields
\ba
F_\e(Z^\e_t)=F_\e(x) + B_t
\ea
and hence 
\ba
\label{e:solZ}
Z^\e_t=F_{\e}^{-1}(B_t + F_\e(x)). 
\ea
Now one can argue that upon passing to the limit as $\e\to 0$, the processes $Z^\e$ converge to the solution $X$ of the original SDE \eqref{e:mainn}.
Unfortunately, the rigorous justification of this intuitively transparent procedure turns out to be tricky. Let us illustrate this by an example.

\begin{exa}
We assume that $\alpha\in (-1,1)$ and for brevity we set the initial value $x=0$.
We consider the family $\{\sigma_\e\}_{\e\in(0,1]}$ of the form
\ba
\sigma_\e(x)=\sqrt{|x|^{2\alpha}+\e^2},\quad x\in\bR,
\ea
see Figure \ref{f:1} (left).
\begin{figure}
\begin{center}
\includegraphics{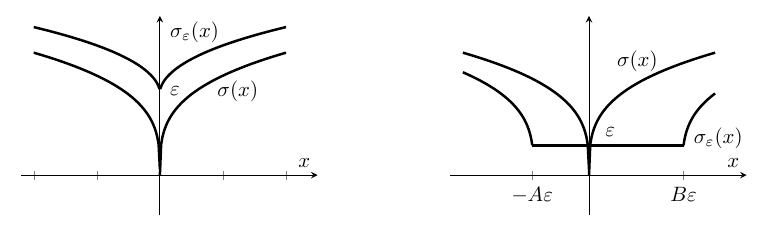}
\end{center}
\caption{The function $\sigma(x)=|x|^\alpha$ and $\sigma_\e(x)= \sqrt{|x|^{2\alpha}+\e^2}$ (left) and  $\sigma_\e(x)$ defined in \eqref{e:sigmae} (right).}
\label{f:1}
\end{figure}

Then, a straightforward calculation yields that
\ba
\label{e:FF}
F_\e(x)&\to F_0(x):=\frac{1}{1-\alpha}|x|^{1-\alpha}\sign x,\\
F^{-1}_\e(x)&\to F_0^{-1}(x):=|(1-\alpha)x|^{\frac{1}{1-\alpha}}\sign x ,
\ea
so that the limit process
\ba
X_t^0:= F_0^{-1}(B_t)
\ea
is a candidate for a solution to \eqref{e:mainn}

On the other hand, let $A,B\geq 0$ and let
\ba
\label{e:sigmae}
\sigma_\e(x)=\begin{cases}
              \e,\quad x\in [-A\e,B\e],\\
              |x-B\e+\e^{1/\alpha}|^\alpha,\quad x> B\e,\\
              |x+A\e-\e^{1/\alpha}|^\alpha,\quad x< -A\e,
             \end{cases}
\ea
see Figure \ref{f:1} (right).
Applying the Lamperti transformation and passing to the limit as above we find that
\ba
F^{-1}_\e(x)\to F^{-1}_{A,B}(x)=\begin{cases} 
0,\quad x\in[A,B],\\
    -|(1-\alpha)(x+A)|^{\frac{1}{1-\alpha}},\quad x< -A,\\
    |(1-\alpha)(x-B)|^{\frac{1}{1-\alpha}},\quad x>B.
                \end{cases}
\ea
Hence another candidate for the solution is the process 
\ba
X^{A,B}_t:=F^{-1}_{A,B}(B_t).
\ea
Note that such a process $X^{A,B}$ spends positive time at zero. 

It turns out that the processes $X^0$ and $X^{A,B}$ 
are indeed strong solutions of the equation \eqref{e:mainn} for $\alpha\in (-1,1)$. This follows by a straightforward verification with the 
help of the generalized It\^o formula by \cite{FPSh-95}.
\end{exa}

Excluding solutions spending positive time at zero as non-physical does not always guarantee uniqueness. Indeed, in our previous paper (\cite{PavShe-20}),
the following theorem was proven.

\begin{thm}[Theorem 4.5, \cite{PavShe-20}]
Let $x\in\bR$ and $\theta\in[-1,1]$, and
let $B^\theta$ be a skew Brownian motion that is a unique strong solution of the SDE
\ba
B^\theta_t=F^{-1}_0(x) + B_t +\theta L_t^0(B^\theta),\quad t\geq 0,
\ea
$L^0(B^\theta)$ being the symmetric local time of $B^\theta$ at zero.

\noindent
1. Let $\alpha\in (0,1)$. Then 
$X^\theta_t=F_0(B^\theta_t)$ 
is a
strong solution of \eqref{e:mainn} which is a homogeneous strong Markov process spending zero time at $0$.

The process $X^\theta$ is the unique
strong solution of \eqref{e:mainn} which is a homogeneous strong Markov process spending zero time at $0$ and such that
\ban
\P(X^\theta_t\geq 0\mid X_0 = 0) =\frac{1+\theta}{2},\quad t>0.
\ean
2. Let $\alpha\in (-1,0]$. Then $X^0_t=F(B_t^0)$ is the unique
strong solution of \eqref{e:mainn} which is a homogeneous strong Markov process spending zero time at $0$. 
\end{thm}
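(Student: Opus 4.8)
The plan is to establish existence together with the listed properties of $X^\theta$ by a direct computation resting on a generalized It\^o formula, and to establish uniqueness by transporting the problem to the skew Brownian motion through the Lamperti change of variable. Throughout write $G:=F_0^{-1}$, so that $G(x)=\int_0^x|y|^{-\alpha}\,\di y$ is the Lamperti transform of \eqref{e:mainn}; note $G(0)=0$, that $G$ is an increasing homeomorphism of $\bR$ when $\alpha<1$, and that $F_0$ satisfies $F_0(0)=0$ and $F_0'(y)=|F_0(y)|^{\alpha}$.

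\emph{Existence and properties of $X^\theta$ (Part 1).} First I would record that, for $\alpha\in(0,1)$, $F_0$ is $C^1$ with $F_0'(0)=|F_0(0)|^{\alpha}=0$ and with a second derivative of order $|y|^{(2\alpha-1)/(1-\alpha)}$ near $0$, hence locally integrable precisely because $\alpha>0$. Since $B^\theta$ is a continuous semimartingale with $\langle B^\theta\rangle_t=t$, the generalized It\^o--Tanaka formula of \cite{FPSh-95} applies to $F_0(B^\theta)$ and gives the existence of $[F_0'(B^\theta),B^\theta]$ and $F_0(B^\theta_t)=F_0(B^\theta_0)+\int_0^tF_0'(B^\theta_s)\,\di B^\theta_s+\tfrac12[F_0'(B^\theta),B^\theta]_t$. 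Splitting $\di B^\theta=\di B+\theta\,\di L^0(B^\theta)$, the term $\theta\int_0^tF_0'(B^\theta_s)\,\di L^0_s(B^\theta)$ vanishes because $\di L^0_s(B^\theta)$ is supported on $\{s:B^\theta_s=0\}$ and $F_0'(0)=0$; using $F_0'(B^\theta_s)=|F_0(B^\theta_s)|^{\alpha}=|X^\theta_s|^{\alpha}$ and $[F_0'(B^\theta),B^\theta]=[|X^\theta|^{\alpha},B]$, this turns into \eqref{e:mainn} written in It\^o form, with $\tfrac12[|X^\theta|^{\alpha},B]$ acting as the It\^o--Stratonovich correction; as $X^\theta$ is adapted to the filtration of $B$, this is a strong solution. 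The remaining properties descend from $B^\theta$ via the homeomorphism $F_0$: homogeneity and the strong Markov property are immediate, $\int_0^t\bI(X^\theta_s=0)\,\di s=\int_0^t\bI(B^\theta_s=0)\,\di s=0$, and, since $F_0$ preserves signs, $\P(X^\theta_t\ge0\mid X_0=0)=\P(B^\theta_t\ge0\mid B^\theta_0=0)=\tfrac{1+\theta}{2}$, the last equality being the defining property of skew Brownian motion from the origin.

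\emph{Uniqueness (Part 1) and Part 2.} Let $X$ be a strong solution of \eqref{e:mainn} which is a homogeneous strong Markov process spending zero time at $0$, and set $Y_t:=G(X_t)$. On every time interval along which $X$ avoids $0$ the coefficient $|x|^{\alpha}$ is smooth, \eqref{e:mainn} in It\^o form reads $\di X_t=|X_t|^{\alpha}\,\di B_t+\tfrac{\alpha}{2}|X_t|^{2\alpha-1}\sign(X_t)\,\di t$, and the ordinary It\^o formula for $G\in C^2(\bR\setminus\{0\})$ shows the drift cancels, so $\di Y_t=\di B_t$ there; thus $Y$ is a Brownian motion off $0$. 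Being the image of $X$ under the homeomorphism $G$ with $G(0)=0$, $Y$ is also a homogeneous continuous strong Markov process spending zero time at $0$; by the It\^o--McKean classification of one-dimensional diffusions, $Y$ has a scale function affine on each half-line and continuous at $0$, and a speed measure equal to $2\,\di y$ off $0$ without atom at $0$ --- that is, $Y$ is a skew Brownian motion with some parameter $\vartheta\in[-1,1]$, and comparison off $\{Y=0\}$ forces its driving Brownian motion to be $B$ (the difference has vanishing quadratic variation). For $\alpha\in(0,1)$ the hypothesis $\P(X_t\ge0\mid X_0=0)=\tfrac{1+\theta}{2}$ identifies $\vartheta=\theta$, and pathwise uniqueness for the skew Brownian SDE gives $Y=B^\theta$, hence $X=F_0(B^\theta)=X^\theta$. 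For $\alpha\in(-1,0]$ no selection hypothesis is needed: when $\alpha=0$ the equation is $\di X=\di B$; when $\alpha\in(-1,0)$ one has $F_0'(0^\pm)=+\infty$, so the integral $\vartheta\int_0^tF_0'(B^\vartheta_s)\,\di L^0_s(B^\vartheta)$ diverges unless $\vartheta=0$, whence $F_0(B^\vartheta)$ fails to be a solution for $\vartheta\ne0$; this pins down $\vartheta=0$ and $X=F_0(B)=X^0$ (the verification that $X^0$ is a solution runs as in Part 1, now invoking \cite{FPSh-95} with $F_0'$ merely locally square-integrable, which holds precisely because $\alpha>-1$).

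\emph{Expected main obstacle.} The subtle point is justifying that $Y=G(X)$ is genuinely a semimartingale across $0$ --- not only off $0$ --- and that its behaviour at $0$ is exactly skew reflection, i.e.\ the application of the diffusion classification. A robust route is approximation: take smooth $G_n$ with $G_n'\to G'=|\cdot|^{-\alpha}$, apply the generalized It\^o formula to $G_n(X)$, and pass to the limit; the martingale parts converge to $B$ since $\int_0^tG'(X_s)^2\,\di\langle X\rangle_s=\int_0^t|X_s|^{-2\alpha}|X_s|^{2\alpha}\,\di s=\int_0^t\bI(X_s\ne0)\,\di s=t$, while the finite-variation parts converge to a finite-variation process carried by $\{Y=0\}$, which by homogeneity and continuity must be $\vartheta L^0(Y)$. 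The inputs about skew Brownian motion used above (pathwise uniqueness of the Harrison--Shepp equation and the identity $\P(B^\vartheta_t\ge0\mid B^\vartheta_0=0)=(1+\vartheta)/2$) are classical.
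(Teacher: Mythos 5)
The theorem you are proving is quoted verbatim from \cite{PavShe-20} (it is their Theorem~4.5). The present paper cites it as known background and contains no proof of it, so there is no in-paper argument against which your attempt can be checked. What can be said is that your sketch follows the route that the authors' own machinery in both papers makes natural --- Lamperti change of variable, skew Brownian motion, generalized It\^o formulas in the sense of \cite{FPSh-95} and \cite{krylov2008controlled} --- and is almost certainly the intended one.

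A few places where the sketch compresses genuinely nontrivial work. First, you invoke \cite{FPSh-95} for $F_0(B^\theta)$, but that result is stated for a standard Brownian motion; for $\alpha\in(0,1)$ the process $B^\theta$ is a semimartingale, $F_0\in C^1$ with $F_0''\in L^1_{\mathrm{loc}}$, and the correct tool is the classical It\^o--Meyer formula (which cleanly produces the local-time contribution from $\theta\,\di L^0(B^\theta)$ and the absolutely continuous part $\tfrac12\int_{\bR}F_0''(a)L^a_t(B^\theta)\,\di a$). Second, the identity $[F_0'(B^\theta),B^\theta]=[|X^\theta|^\alpha,B]$ --- that is, that the It\^o-to-Stratonovich correction coincides with the quadratic covariation appearing in the definition of the Stratonovich integral --- is a substantive claim about existence and identification of the bracket, not a notational rewriting; compare Lemma~\ref{l:skobka} of the present paper, whose whole purpose is to verify the analogous bracket identity for the perturbed equation by controlling discrete approximating sums. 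Third, in the uniqueness step the reduction of $Y=G(X)$ to a regular one-dimensional diffusion to which the It\^o--McKean classification applies is the crux; your ``expected obstacle'' paragraph correctly flags the semimartingale property of $Y$ across $0$, but passing from weak classification (scale/speed) to pathwise identification with the Harrison--Shepp SDE driven by the given $B$, and the convergence of the finite-variation parts of $G_n(X)$ to $\vartheta L^0(Y)$, both need more than the appeal to homogeneity and continuity you offer. Finally, for Part~2 with $\alpha\in(-1,0)$, $F_0$ is not $C^1$ at $0$ and $B^\vartheta$ is not a Brownian motion for $\vartheta\neq 0$, so neither \cite{FPSh-95} nor It\^o--Meyer applies directly to $F_0(B^\vartheta)$; your ``the local-time integral diverges'' argument is the right heuristic, but making it rigorous requires an extension of the generalized It\^o formula to skew Brownian motion, which is a real piece of the original proof, not a remark. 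None of these is a wrong turn, but each is where the heavy lifting in \cite{PavShe-20} would actually be.
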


In other words, for $\alpha\in (0,1)$, the equation \eqref{e:mainn} is underdetermined even in the class of 
homogeneous strong Markov processes spending zero time at $0$. In this respect, equation \eqref{e:mainn} can be seen as a stochastic 
counterpart of the deterministic non-Lipschitzian equation $\dot x_t= |x_t|^\alpha$ that has infinitely many solutions if started at $x\leq 0$. 
 
In general, these solutions spend positive time at $0$. However one can always single out the unique \emph{maximal} solution spending zero time
at 0. Such a non-uniqueness feature of a ODE with continuous coefficients is known as the Peano phenomenon. It is also known that uniqueness, 
or singling out of physically meaningful solutions
spending zero time at $0$, 
can be restored by means of the \emph{regularization by noise}, studied for the first time by   
\cite{BaficoB-82} and \cite{veretennikov1983approximation}. 
In particular Theorem 5.1 in \cite{BaficoB-82} implies that solutions of the SDE
\ba
x_t^\e=x+\int_0^t |x^\e_s|^\alpha\,\di s + \e W_t
\ea
driven by a small Brownian motion $\e W$ weakly converge to the maximal deterministic solution
\ba 
x_t^*(x)= \begin{cases}
             \big((1-\alpha)t + x^{1-\alpha}\big)^\frac{1}{1-\alpha},\quad x>0,\\
             -\big(|x|^{1-\alpha} - (1-\alpha)t\big)_+^\frac{1}{1-\alpha}
             +\big((1-\alpha)t-|x|^{1-\alpha}\big)_+^{\frac{1}{1-\alpha}} ,\quad x\leq 0,
            \end{cases}
\ea
that spends zero time at $0$.
More results on the stochastic Peano phenomenon can be found in the works by 
\cite{delarue2014transition}, \cite{trevisan2013zero}, 
\cite{pilipenko2018selection}, \cite{PilipenkoPa-20}.

The goal of this paper is to apply the ``regularization'' procedure to the SDE \eqref{e:mainn}, i.e.\ to consider the perturbed SDE
\ba
\label{e:main+eps}
X_t^\e=x+\int_0^t |X_s^\e|^\alpha\circ \di B_s+\e W_t
\ea
in the presence of another independent Brownian motion $W$. From the physical point of view, the Brownian motion $W$ with vanishing amplitude $\e$
represents the \emph{ambient environmental noise} whereas the Brownian motion $B$ is responsible for the diffusive behaviour of the 
``heavy'' particle $X^\e$ in the non-homogeneous medium determined by the non-linear ``temperature profile'' $x\mapsto|x|^\alpha$. 

The qualitative result of the present paper is that solutions of the regularized equation \eqref{e:main+eps}
converge to the \emph{benchmark} solution
\ba
\label{e:benchmark}
X^0_t=F_0^{-1}(B_t)=\Big|(1-\alpha)B_t+ |x|^{1-\alpha}\sign x\Big|^\frac{1}{1-\alpha}\sign \Big((1-\alpha)B_t+ |x|^{1-\alpha}\sign x\Big)
\ea
that spends zero time at $0$ and has no ``asymmetry'' at the origin.   

This result is a manifestation of the ``selection'' procedure in the SDE case. In the limit as $\e\to 0$, the regularized solution
$X^\e$ always ``selects'' the natural solution \eqref{e:benchmark} among all possible solutions of \eqref{e:mainn} that may have asymmetric 
behaviour at 0, stay positive time at zero, or be non-Markovian, etc.

\section{Preliminaries and the main result \label{sec:prelim}}

Throughout the article, we work on a stochastic basis $(\Omega,\rF,\bF,\P)$, i.e.\ a complete 
probability space with a filtration $\bF = (\rF_t)_{t\ge 0}$ satisfying the standard assumptions. 
The process $(B,W) = (B_t, W_t)_{t\ge 0}$ is a standard continuous two-dimensional Brownian motion on this stochastic basis. 

First we briefly recall definitions related to stochastic integration. More details may be found in \cite{Protter-04}. 

The main mode of convergence considered here is the uniform convergence on compacts in probability 
(the u.c.p.\ convergence for short): a sequence $X^n = (X_t^n)_{t\ge 0}$, $n\geq 1$, of stochastic processes converges 
to $X = (X_t)_{t\ge 0}$ in u.c.p.\ if for any $t\ge 0$ 
\ba
\sup_{s\in[0,t]} |X^n_s - X_s| \stackrel{\P}{\to} 0,\quad  n\to\infty.
\ea

Let a sequence of (deterministic) partitions $D_n=\{0=t^n_0<t_1^n<t_2^n<\cdots\}=\{0=t_0<t_1<t_2<\cdots\}$ of $[0, \infty)$, $n\geq 1$, 
be such that for each $t\geq 0$ the number of points in each interval $[0, t]$ is finite, 
and the mesh $\|D_n\|:=\sup_{k\ge 1}|t_{k}^n - t_{k-1}^n|\to 0$ as $n\to \infty$.
A continuous stochastic process $X$ has quadratic variation $[X]$ along the sequence $\{D_n\}_{n\geq 1}$ if the limit
\ban
{}[X]_t:=\lim_{n\to \infty} \sum_{t_k\in D_n, t_k<t}|X_{t_{k+1}}-X_{t_{k}}|^2
\ean
exists in the u.c.p.\ sense.
Similarly, the quadratic 
covariation $[X,Y]$ of two continuous stochastic processes $X$ and $Y$ is defined as a limit in u.c.p.\
\ban
{}[X,Y]_t:=\lim_{n\to \infty} \sum_{t_k\in D_n, t_k<t}(X_{t_{k+1}}-X_{t_{k}})(Y_{t_{k+1}}-Y_{t_{k}}).
\ean
When $X$ and $Y$ are semimartingales, the quadratic variations $[X]$, $[Y]$ and the quadratic 
covariation $[X,Y]$ exist, moreover, they have bounded variation on any finite interval. 

Further, we define the It\^o integral as a limit 
in u.c.p.\
\ban
\int_0^t X_s\, \di Y_s=\lim_{n\to \infty} \sum_{t_k\in D_n, t_k<t} X_{t_{k}}(Y_{t_{k+1}}-Y_{t_{k}})
\ean
and the Stratonovich (symmetric) integral as a limit in u.c.p.\
\ban
&\int_0^t X_s\circ \di Y_s= \int_0^t X_s\, \di Y_s+\frac 12 [X,Y]_t  
\\&= \lim_{n\to \infty} \sum_{t_k\in D_n, t_k<t} \frac12 (X_{t_{k+1}}+X_{t_k})(Y_{t_{k+1}}-Y_{t_{k}}),
\ean
provided that both the It\^o integral and the quadratic variation exists. Again, when both $X$ and $Y$ are continuous semimartingales, 
both integrals exists, and the convergence holds in u.c.p.

We recall that a continuous semimartingale $X$ is an It\^o semimartingale if its
semimartingale characteristics are absolutely continuous with respect to Lebesgue measure, see, e.g., Definition 2.1.1 in \cite{jacod2011discretization} and 
\cite{ait2018semimartingale}.

In this paper, we define 
\ba
\sign x:=\begin{cases}
	-1,\ x< 0,\\
	0,\ x=0,\\
	1,\ x>0,
\end{cases}
\ea
and for any $\alpha\in \bR$ we set
\ba
|x|^\alpha:=\begin{cases}
	|x|^\alpha,\ x\neq 0,\\
	0,\ x=0.
\end{cases}
\ea
We also denote 
\ba
(x)^\alpha: = |x|^\alpha \sign x.
\ea
Throughout the article, $C$ will be used to denote a generic constant, whose concrete value is not important and may change between lines. 

First we give the intuition that will lead to the main result.
Although the function $x\mapsto|x|^\alpha$ is not smooth,  let us formally rewrite equation \eqref{e:main+eps} in the It\^o form
with the noise induced drift
in the spirit of the formula \eqref{e:Xsigma}. Thus we obtain another SDE
\ba
\label{e:main+eps-ito}
Y^\e_t = x + \int_0^t |Y^\e_s|^\alpha \di B_s + \frac{\alpha}{2}\int_0^t (Y^\e_s)^{2\alpha-1}\, \di s+ \e  W_t.
\ea
Note that the SDEs \eqref{e:main+eps} and \eqref{e:main+eps-ito} are 
of different nature and do not have to be equivalent.

Assume now that $Y^\e$ is the solution of 
\eqref{e:main+eps-ito}. Then it is a Markov process with the generator 
\ba
A^\e f(y)=\frac{|y|^{2\alpha}+\e^2}{2}f''(y)+ \frac{\alpha}{2} (y)^{2\alpha-1} f(y),\quad f\in C^2(\bR,\bR).
\ea
However, $A^\e$ is also the generator of the diffusion 
\ba
\label{e:Z}
Z^\e_t
& = x + \int_0^t \sqrt{|Z^\e_s|^{2\alpha}+\e^2}\, \di \widetilde B_s+ \frac{\alpha}{2}\int_0^t (Z^\e_s)^{2\alpha-1} \di s\\
& = x + \int_0^t \sqrt{|Z^\e_s|^{2\alpha}+\e^2}\circ \di \widetilde B_s
\ea
driven by some other Brownian motion $\widetilde B$. The solution to equation \eqref{e:Z} has been determined explicitly in \eqref{e:solZ}
as $Z^\e_t=F^{-1}_\e(\widetilde B_t+F_\e(x))$, and hence we get that 
\ba
Y^\e_t\stackrel{\di}{=} F^{-1}_\e(\widetilde B_t+F_\e(x)),\quad t\geq 0.
\ea
It is clear that $Z^\e$, and also $Y^\e$, converges in law to the benchmark solution $X^0$. The same convergence 
will also hold for solutions $X^\e$ of the perturbed Stratonovich equation \eqref{e:mainn} if one establishes an equivalence between the equations
 \eqref{e:mainn} and \eqref{e:main+eps-ito}.

Now let us turn to equations \eqref{e:mainn} and \eqref{e:main+eps}. The concept of strong solution for these equations is defined in a standard manner.
\begin{dfn}
A strong solution to \eqref{e:main+eps} is a continuous stochastic process $X^\e$ such that 
\begin{enumerate}
		\item
		$X^\e$ is adapted to the augmented 
		natural filtration of $(B,W)$;
		\item 
		for any $t\geq 0$, the integral $\int_0^t |X_s^\e|^\alpha\, \di B_s$ and the quadratic covariation 
		$[|X^\e|^\alpha,\ B]_t$ exist;
		\item 
		for any $t\geq 0$, equation \eqref{e:main+eps}  holds $\P$-a.s.
	\end{enumerate}
\end{dfn}

\begin{dfn}
A strong solution to \eqref{e:main+eps-ito} is a continuous stochastic process $Y^\e$ such that 
\begin{enumerate}
		\item
		$Y^\e$ is adapted to the augmented 
		natural filtration of $(B,W)$;
		\item 
		for any $t\geq 0$, the integrals $\int_0^t |Y^\e_s|^\alpha\, \di B_s$ and $\int_0^t (Y^\e_s)^{2\alpha-1}\, \di s$ exist;
\item 
for any $t\geq 0$, equation \eqref{e:main+eps-ito}  holds $\P$-a.s.
\end{enumerate}
\end{dfn}

\begin{dfn}
A weak solution to \eqref{e:main+eps} is a triple $(\widetilde X^\e,\widetilde B,\widetilde W)$
of adapted continuous processes on a
stochastic basis $(\widetilde \Omega,\widetilde \rF,\widetilde \bF,\widetilde \P)$ such that
\begin{enumerate}
\item
$\widetilde B$ and $\widetilde W$
are independent standard Brownian motions on $(\widetilde \Omega,\widetilde \rF,\widetilde \bF,\widetilde \P)$;
\item
 for any $t \geq 0$, the integral $\int_0^t |\widetilde X^\e_s|^\alpha\,\di \widetilde B_s$ 
 and the quadratic covariation $[|\widetilde X^\e|^\alpha ,\widetilde B]_t$ exist;
\item
 for any $t\geq 0$,
\ba
\widetilde X_t^\e = x+ \int_0^t |\widetilde X_s^\e|^\alpha\circ \di \widetilde B_s + \e \widetilde W_t
\ea 
holds $\widetilde \P$-a.s.
\end{enumerate}
\end{dfn}

\begin{dfn}
A weak solution to \eqref{e:main+eps-ito} is a 
 is a triple $(\widetilde Y^\e,\widetilde B,\widetilde W)$
of adapted continuous processes on a
stochastic basis $(\widetilde \Omega,\widetilde \rF,\widetilde \bF,\widetilde \P)$ such that
\begin{enumerate}
\item
$\widetilde B$ and $\widetilde W$
are independent standard Brownian motions on $(\widetilde \Omega,\widetilde \rF,\widetilde \bF,\widetilde \P)$;
\item
 for any $t \geq 0$, the integrals $\int_0^t |\widetilde Y^\e_s|^\alpha\,\di \widetilde B_s$ 
 and $\int_0^t (\widetilde Y^\e)^{2\alpha-1}\,\di s$ exist;
\item
 for any $t\geq 0$,
\ba
\widetilde Y_t^\e = x+ \int_0^t |\widetilde Y_s^\e|^\alpha\,\di \widetilde B_s 
+ \frac{\alpha}{2}\int_0^t (\widetilde Y^\e)^{2\alpha-1}\,\di s +\e \widetilde W_t
\ea 
holds $\widetilde \P$-a.s.
\end{enumerate}
\end{dfn}

The main results of this paper are given in the following theorems. The proofs will be presented in the subsequent sections.
\begin{thm}
\label{t:weakexists}
For any $\e>0$ and any $\alpha \in (-1,1)$, equation \eqref{e:main+eps-ito} has a weak solution, where 
for $\alpha = 0$, $\frac{\alpha}{2}\int_0^t (Y^\e_s)^{2\alpha-1} \di s :=0$ and
for $\alpha \in (-1,0)$ 
the integral 
\ba
\int_0^t (Y^\e_s)^{2\alpha-1}\,\di s:= \lim_{\delta\downarrow 0}\int_0^t (Y^\e_s)^{2\alpha-1}\bI(|Y^\e_s|>\delta)\, \di s
\ea
is understood in the principle value (v.p.) sense as defined in \cite{Cherny-01}.
\end{thm}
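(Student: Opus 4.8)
\medskip
\noindent\textbf{Proof proposal.}
The plan is to build the weak solution explicitly through the Lamperti transform and then to split the driving noise into two independent Brownian motions. Fix $\e>0$ and $\alpha\in(-1,1)$ and recall from the Introduction the functions $\sigma_\e(y)=\sqrt{|y|^{2\alpha}+\e^2}$ and $F_\e(y)=\int_0^y\sigma_\e(u)^{-1}\,\di u$; put $G:=F_\e^{-1}$. Then $G$ is an odd, strictly increasing bijection of $\bR$, smooth on $\bR\setminus\{0\}$, with $G'(z)=\sigma_\e(G(z))$ and $G''(z)=\alpha\,(G(z))^{2\alpha-1}$ for $z\neq0$. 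For $\alpha\in[0,1)$ one has $G(z)\sim\e z$ near $0$, so $G'$ is bounded on compacts and $G''$ is locally integrable (its singularity at $0$ has exponent $2\alpha-1>-1$); for $\alpha\in(-1,0)$ one has $G(z)\sim c\,(z)^{1/(1-\alpha)}$ near $0$ with $1/(1-\alpha)<1$, so $G$ has an infinite slope at $0$ and $G''$ is not locally integrable, its singularities on the two sides of $0$ being of opposite sign. This last feature is precisely the origin of the principal value appearing in the statement.

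\emph{Step 1: the auxiliary diffusion.} On some stochastic basis take a standard Brownian motion $\beta$ started at $F_\e(x)$ and set $Z^\e_t:=G(\beta_t)$, the process already found in \eqref{e:solZ}. I would apply a change-of-variables formula to $G(\beta_t)$. For $\alpha\in[0,1)$ the generalized It\^o formula of \cite[Chapter~2, Section~10]{krylov2008controlled} (see also \cite{FPSh-95}) applies verbatim to $G$ and yields
\[
Z^\e_t=x+\int_0^t\sigma_\e(Z^\e_s)\,\di\beta_s+\frac{\alpha}{2}\int_0^t(Z^\e_s)^{2\alpha-1}\,\di s,
\]
the last integral being a genuine Lebesgue integral, equal to $0$ when $\alpha=0$. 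For $\alpha\in(-1,0)$ the same identity should hold with the drift understood in the v.p.\ sense, and here I would invoke the extension of It\^o's formula to principal-value functionals of Brownian motion from \cite{Cherny-01}. The point to check is that $\lim_{\delta\downarrow0}\int_0^t(Z^\e_s)^{2\alpha-1}\bI(|Z^\e_s|>\delta)\,\di s$ exists; since $\{|Z^\e_s|>\delta\}=\{|\beta_s|>F_\e(\delta)\}$ and $G''(z)=\alpha(G(z))^{2\alpha-1}$ is odd, the occupation-times formula for $\beta$ reduces this to the convergence, as $\delta\downarrow0$, of $\int_{\delta}^{1}G''(z)\bigl(L^z_t(\beta)-L^{-z}_t(\beta)\bigr)\,\di z$, where $|G''(z)|\lesssim|z|^{\gamma}$ near $0$ with $\gamma=(2\alpha-1)/(1-\alpha)\in(-3/2,-1)$; as Brownian local time is jointly continuous and locally H\"older of any order $\eta<1/2$ in the space variable, $|L^z_t(\beta)-L^{-z}_t(\beta)|\lesssim|z|^{\eta}$, and choosing $\eta>-1-\gamma$ (possible because $\gamma>-3/2$) makes the integrand $O(|z|^{\gamma+\eta})$ with $\gamma+\eta>-1$, hence integrable. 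One also needs the stochastic integral above to be well defined, i.e.\ $\int_0^t(|Z^\e_s|^{2\alpha}+\e^2)\,\di s<\infty$ a.s.; this follows from $|G(z)|^{2\alpha}\le C(1+|z|^{-p})$ with $p=-2\alpha/(1-\alpha)<1$ together with the classical fact $\int_0^t|\beta_s|^{-p}\,\di s<\infty$ a.s.

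\emph{Step 2: splitting the martingale.} Set $M_t:=\int_0^t\sigma_\e(Z^\e_s)\,\di\beta_s$, so $\di[M]_t=(|Z^\e_t|^{2\alpha}+\e^2)\,\di t$. Enlarge the probability space by a standard Brownian motion $N$ independent of $\beta$; then $M$ remains a continuous local martingale in the enlarged filtration with $[M,N]\equiv0$. Define
\[
B_t:=\int_0^t\frac{|Z^\e_s|^\alpha}{|Z^\e_s|^{2\alpha}+\e^2}\,\di M_s+\int_0^t\frac{\e}{\sqrt{|Z^\e_s|^{2\alpha}+\e^2}}\,\di N_s,
\]
\[
W_t:=\int_0^t\frac{\e}{|Z^\e_s|^{2\alpha}+\e^2}\,\di M_s-\int_0^t\frac{|Z^\e_s|^\alpha}{\sqrt{|Z^\e_s|^{2\alpha}+\e^2}}\,\di N_s.
\]
All four integrands are bounded (by $1$, respectively by $1/(2\e)$ and $1/\e$), so $B$ and $W$ are continuous local martingales; a one-line computation of brackets gives $[B]_t=[W]_t=t$ and $[B,W]_t=0$, so by L\'evy's characterisation $(B,W)$ is a two-dimensional Brownian motion, in particular $B$ and $W$ are independent. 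Moreover $|Z^\e_s|^\alpha\,\di B_s+\e\,\di W_s=\di M_s$, so substituting into the identity of Step~1 yields
\[
Z^\e_t=x+\int_0^t|Z^\e_s|^\alpha\,\di B_s+\frac{\alpha}{2}\int_0^t(Z^\e_s)^{2\alpha-1}\,\di s+\e W_t,
\]
i.e.\ $(Z^\e,B,W)$ is a weak solution of \eqref{e:main+eps-ito} (with the drift in the v.p.\ sense for $\alpha\in(-1,0)$ and equal to $0$ for $\alpha=0$), which establishes Theorem~\ref{t:weakexists}.

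The only genuinely delicate step is the $\alpha\in(-1,0)$ part of Step~1: proving that $Z^\e=G(\beta)$ is indeed a semimartingale whose finite-variation part is exactly the principal-value functional of \cite{Cherny-01}, with no extra local-time term created at the origin by the infinite slope of $G$ there. The remaining points — the $\alpha\ge0$ case, the bracket identities in Step~2, and the finiteness of the various integrals — are routine.
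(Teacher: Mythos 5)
Your proposal is correct and is essentially the paper's proof \eqref{e:wt X}--\eqref{dW = |X|dB+edW}: your pair $(\beta,N)$ is the paper's $(\widehat W,\widehat N)$ with $\widehat W=\tfrac1{\sqrt2}(W^1+W^2)$, $\widehat N=\tfrac1{\sqrt2}(W^1-W^2)$, and your rotation defining $(B,W)$ is, after this change of basis, identical to \eqref{e:wt B}--\eqref{e:wt W}. The only cosmetic differences are that the paper invokes \cite[Theorem 4.1]{FPSh-95} rather than Krylov for the Lamperti step and simply cites \cite[Corollary 4.4]{Cherny-01} for the $\alpha\in(-1,0)$ covariation, where you additionally sketch (correctly, via the occupation formula and H\"older continuity of Brownian local time) why the principal value exists.
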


\begin{thm}
\label{t:unicite}
For any $\e>0$ and any $\alpha \in (0,1)$, the pathwise uniqueness property holds for the SDE \eqref{e:main+eps-ito}.
\end{thm}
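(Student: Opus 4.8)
The plan is to reduce \eqref{e:main+eps-ito} to an SDE to which the Yamada--Watanabe pathwise-uniqueness criterion applies, via a Lamperti-type change of variables that removes the (for $\alpha\in(0,1/2)$ singular) drift. With $\sigma_\e(y)=\sqrt{|y|^{2\alpha}+\e^2}\ge\e$ and $F_\e(y)=\int_0^y\sigma_\e(z)^{-1}\di z$ as in the Introduction, $F_\e$ is a strictly increasing bijection of $\bR$ (it is onto because $1/\sigma_\e$ is not integrable at $\pm\infty$, as $\alpha<1$), it lies in $C^1$, and $F_\e''(y)=-\sigma_\e'(y)\sigma_\e(y)^{-2}$ behaves near $0$ like $-\tfrac{\alpha}{\e^3}(y)^{2\alpha-1}$, which is locally integrable since $2\alpha-1>-1$; hence the It\^o formula with generalized second derivative (\cite{krylov2008controlled}) may be applied to $F_\e(Y^\e)$ for any continuous semimartingale solution $Y^\e$ of \eqref{e:main+eps-ito}. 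A short computation --- the same cancellation that produces the Lamperti representation \eqref{e:solZ} --- shows that the drift of $F_\e(Y^\e)$ vanishes identically: $U^\e:=F_\e(Y^\e)$ solves
\ba
U^\e_t=F_\e(x)+\int_0^t a(U^\e_s)\,\di B_s+\int_0^t c(U^\e_s)\,\di W_s,\qquad a(u)^2+c(u)^2\equiv1,
\ea
with $a(u)=|F_\e^{-1}(u)|^\alpha/\sigma_\e(F_\e^{-1}(u))$ and $c(u)=\e/\sigma_\e(F_\e^{-1}(u))$; in particular $U^\e$ is a Brownian motion. Since $F_\e$ is a homeomorphism, pathwise uniqueness for \eqref{e:main+eps-ito} is equivalent to pathwise uniqueness for this transformed equation, and I would establish the latter.

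The coefficients $a,c$ are smooth away from $u=0$ (hence locally Lipschitz there); and since $F_\e^{-1}(u)\sim\e u$ near $0$, one has $a(u)\sim\e^{\alpha-1}|u|^\alpha$ and $c(u)=1-\tfrac12\e^{2\alpha-2}|u|^{2\alpha}+O(|u|^{4\alpha})$ near $0$. For $\alpha\in[\tfrac12,1)$ this immediately yields the Yamada--Watanabe argument: for two solutions $U^{\e,1},U^{\e,2}$ with the same $(B,W)$ set $V:=U^{\e,1}-U^{\e,2}$; then $V$ is a continuous local martingale (there is no drift term to control) with $\di[V]_t=\big((a(U^{\e,1}_t)-a(U^{\e,2}_t))^2+(c(U^{\e,1}_t)-c(U^{\e,2}_t))^2\big)\di t\le\rho(|V_t|)^2\di t$ on compacts, $\rho(r)^2=Cr^{2\alpha}$ for $r\le1$; since $2\alpha\ge1$ we have $\int_{0+}\rho(r)^{-2}\di r=\infty$, so applying It\^o to $\phi_n(V)$ for the usual concave approximations $\phi_n\uparrow|\cdot|$ adapted to $\rho$ and localizing by exit times of $U^{\e,i}$ from balls gives $\E|V_{t\wedge\tau_K}|=0$, whence $V\equiv0$ and $Y^{\e,1}\equiv Y^{\e,2}$.

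The essential difficulty is the regime $\alpha\in(0,\tfrac12)$ --- precisely the range in which the unperturbed equation \eqref{e:mainn} is non-unique and for which the ambient noise $\e W$ is indispensable. Here $a$ is only $\alpha$-H\"older near $u=0$, and this is sharp (the bound $|a(u)-a(v)|\le C|u-v|^\alpha$ is attained at $v=0$), so $\rho(r)^2=Cr^{2\alpha}$ with $2\alpha<1$ and $\int_{0+}\rho^{-2}<\infty$: the plain Yamada--Watanabe argument, on the transformed or on the original equation, breaks down. What saves uniqueness is that the \emph{total} diffusion coefficient of $U^\e$ does not degenerate at the bad point: $a(0)^2+c(0)^2=1$, i.e.\ near $u=0$ the driving noise is essentially $W$, which is common to both solutions and non-degenerate; so the origin is not an absorbing point and the Girsanov-type non-uniqueness mechanism is absent. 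Making this precise is, I expect, the crux of the proof: one would localize away from $0$ (where $a,c$ are Lipschitz and uniqueness propagates) and handle the excursions of $(U^{\e,1},U^{\e,2})$ near $(0,0)$ separately --- either by a further, $\alpha$-dependent change of variables designed to lift the H\"older exponent above $\tfrac12$ while keeping the resulting drift controllable, or by a Zvonkin/It\^o--Tanaka-type transform exploiting the non-degenerate $W$-direction together with the bounded densities of the $U^{\e,i}$ (which hold, since each $U^{\e,i}$ is a Brownian motion), so that the occupation time of any neighbourhood of $0$, and with it the contribution of the low-regularity region to the Yamada--Watanabe estimate, is controlled.
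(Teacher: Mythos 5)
Your Lamperti reduction --- passing to $U^\e=F_\e(Y^\e)$, observing that the drift cancels so that $U^\e$ solves a drift-free SDE with $a^2+c^2\equiv 1$ and is therefore a standard Brownian motion, and then trying to propagate uniqueness through the homeomorphism $F_\e$ --- is exactly the paper's starting point. Your Yamada--Watanabe estimate closes the case $\alpha\in[\tfrac12,1)$. But for $\alpha\in(0,\tfrac12)$ you correctly identify that the plain Yamada--Watanabe argument fails ($a$ is only $\alpha$-H\"older near $0$ with $2\alpha<1$, so $\int_{0+}\rho^{-2}<\infty$), correctly name the saving feature (the joint diffusion $(a,c)$ is nondegenerate at the bad point), and then leave the matter as a sketch: ``making this precise is the crux.'' That is a genuine gap, and it covers precisely the range of $\alpha$ in which the unperturbed equation is truly non-unique, so it is the half of the theorem that matters most.

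The paper makes your intuition precise, for all $\alpha\in(0,1)$ at once and with no case split, via Le Gall's local-time criterion (\cite{legall1984one}). One sets $R^\e=U^\e-\widetilde U^\e$ and shows $\int_0^t \frac{\bI(R^\e_s>0)}{R^\e_s}\,\di[R^\e]_s<\infty$ a.s.; this forces $L^0(R^\e)=0$, and then Tanaka's formula exhibits $U^\e\vee\widetilde U^\e$ as yet another solution, so that weak uniqueness (which you already have, each $U^\e$ being a Brownian motion) upgrades to pathwise uniqueness. The technical key is Lemma~\ref{l:G}: since $(G_\e,H_\e)$ lies on the unit circle, write $G_\e=\sin\beta$, $H_\e=\cos\beta$ to get $|G_\e(y_1)-G_\e(y_2)|^2+|H_\e(y_1)-H_\e(y_2)|^2=2(1-\cos(\beta_1-\beta_2))\le|\beta_1-\beta_2|^2$, and $\sin(\beta_1-\beta_2)=\e(|y_1|^\alpha-|y_2|^\alpha)/(\sigma_\e(y_1)\sigma_\e(y_2))$ ties the angle to $||y_1|^\alpha-|y_2|^\alpha|$. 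The crucial further move you don't make is to trade the square for a first power using $|z|^2\wedge 1\le|z|$ together with the monotonicity of $y\mapsto(y)^\alpha$: on $\{R^\e_s>0\}$ this turns the integrand into a difference quotient $\bigl(h_\e(U^\e_s)-h_\e(\widetilde U^\e_s)\bigr)/(U^\e_s-\widetilde U^\e_s)$ of the function $h_\e(u)=(F_\e^{-1}(u))^\alpha$, whose derivative is merely locally integrable (not bounded). Finiteness then follows from the occupation-time formula applied to the interpolated martingales $R^{\e,\theta}=\theta U^\e+(1-\theta)\widetilde U^\e$, whose quadratic-variation density stays in $[\tfrac12,1]$ by the same nondegeneracy you pointed at, so their local times are controlled by Brownian local time. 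So your ``nondegenerate $W$-direction plus occupation time'' hunch is the right one; the missing ingredients are Le Gall's $L^0=0$ criterion, the unit-circle estimate of Lemma~\ref{l:G}, and the $|z|^2\wedge 1\le|z|$ trick that converts a sub-critical H\"older bound into an integrable-derivative bound.
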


Applying  the Yamada--Watanabe theorem, we arrive at the following result.

\begin{thm}
\label{t:exuniqito}
	For any $\e>0$ and $\alpha \in (0,1)$, equation \eqref{e:main+eps-ito} has a unique strong solution. 
\end{thm}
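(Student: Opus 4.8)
The plan is to deduce Theorem~\ref{t:exuniqito} from Theorems~\ref{t:weakexists} and~\ref{t:unicite} via the Yamada--Watanabe theorem, which states that weak existence together with pathwise uniqueness imply both uniqueness in law and the existence of a \emph{strong} solution; a strong solution that is moreover pathwise unique is then unique up to indistinguishability. Concretely, since $(0,1)\subset(-1,1)$, Theorem~\ref{t:weakexists} already furnishes a weak solution $(\widetilde Y^\e,\widetilde B,\widetilde W)$ of \eqref{e:main+eps-ito}, and for $\alpha\in(0,1)$ the noise-induced drift is an ordinary (absolutely convergent) Lebesgue integral, no principal value being needed: since the finite-variation part of any solution of \eqref{e:main+eps-ito} is absolutely continuous and $B\perp W$, the quadratic variation has density $|Y^\e_s|^{2\alpha}+\e^2\ge\e^2$, so by the occupation times formula
\ba
\int_0^t\bigl|(Y^\e_s)^{2\alpha-1}\bigr|\,\di s=\int_{\bR}\frac{|a|^{2\alpha-1}}{|a|^{2\alpha}+\e^2}\,L_t^a(Y^\e)\,\di a<\infty\quad\text{a.s.},
\ea
because $a\mapsto L_t^a(Y^\e)$ is a.s.\ bounded with compact support while $|a|^{2\alpha-1}$ is integrable at the origin (here $2\alpha-1>-1$ is used); the same remark shows that all integrals appearing in the definition of a strong solution are well defined.

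Next I would combine this with the pathwise uniqueness established in Theorem~\ref{t:unicite} and apply the Yamada--Watanabe theorem. Since the drift $y\mapsto\frac{\alpha}{2}(y)^{2\alpha-1}$ is merely measurable --- and, for $\alpha<1/2$, locally unbounded at the origin --- I would not use the classical version requiring continuous or Lipschitz coefficients, but the version valid for one-dimensional It\^o equations with measurable coefficients; see, e.g., \cite{Cherny-01}, whose framework this paper already invokes for the principal-value integral. It yields a solution of \eqref{e:main+eps-ito} adapted to the augmented natural filtration of the driving pair $(B,W)$ --- which is exactly the notion of strong solution in our definition --- together with uniqueness in law. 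Combining the former with the pathwise uniqueness of Theorem~\ref{t:unicite} shows that this strong solution is unique, which is the assertion of the theorem.

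The step requiring the most care --- though it amounts to bookkeeping rather than hard analysis --- is verifying that the abstract Yamada--Watanabe theorem genuinely applies in the present singular setting: that the equation it considers, solved relative to the two-dimensional noise $(B,W)$ with all It\^o and Lebesgue integrals interpreted as in our definitions, coincides with ours, and that no regularity of the coefficients beyond measurability (together with the integrability at $0$ noted above) is needed. All the substantive work has already been carried out in Theorems~\ref{t:weakexists} and~\ref{t:unicite}, so Theorem~\ref{t:exuniqito} is essentially their formal corollary.
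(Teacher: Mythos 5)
Your proposal is correct and takes the same route as the paper: the paper's proof of Theorem~\ref{t:exuniqito} is precisely a one-line appeal to the Yamada--Watanabe theorem, combining the weak existence of Theorem~\ref{t:weakexists} with the pathwise uniqueness of Theorem~\ref{t:unicite}. Your additional check via the occupation times formula that the drift integral is absolutely convergent for $\alpha\in(0,1)$ (so no principal value is needed) is a sound clarification of what Theorem~\ref{t:weakexists} already states. One small slip: \cite{Cherny-01} in this paper is the reference for principal-value integrals, not for a measurable-coefficient Yamada--Watanabe theorem; the measure-theoretic Yamada--Watanabe result needs no regularity of coefficients at all and can be cited from standard texts (e.g.\ Ikeda--Watanabe or Revuz--Yor), so no special version is required here.
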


Concerning the equivalence of  \eqref{e:main+eps} and  \eqref{e:main+eps-ito} the following holds true.

\begin{thm}
\label{t:itobracket}
For any $\alpha\in (0,1)$, a strong solution to the stochastic differential equation 
\eqref{e:main+eps-ito} is also a strong solution to \eqref{e:main+eps}.
\end{thm}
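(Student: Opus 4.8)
The plan is to reduce the assertion to the single bracket identity $[\,|Y^\e|^\alpha,B\,]_t=\alpha\int_0^t(Y^\e_s)^{2\alpha-1}\,\di s$ and to prove that identity with the generalized It\^o formula of \cite{FPSh-95}. First I would record the elementary reductions. A strong solution $Y^\e$ of \eqref{e:main+eps-ito} is a continuous It\^o semimartingale with continuous martingale part $M_t=\int_0^t|Y^\e_s|^\alpha\,\di B_s+\e W_t$, so that $\di[Y^\e]_s=(|Y^\e_s|^{2\alpha}+\e^2)\,\di s\ge\e^2\,\di s$, $\di[Y^\e,B]_s=|Y^\e_s|^\alpha\,\di s$ and $\di[Y^\e,W]_s=\e\,\di s$. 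The strict lower bound on $[Y^\e]$ makes $Y^\e$ a genuinely non-degenerate diffusion: by the occupation times formula it spends zero Lebesgue time at $0$, its local time $(a,t)\mapsto L^a_t(Y^\e)$ is continuous, and $\int_0^t|Y^\e_s|^\beta\,\di s=\int_{\bR}|a|^\beta(|a|^{2\alpha}+\e^2)^{-1}L^a_t(Y^\e)\,\di a<\infty$ a.s.\ for every $\beta>-1$; in particular the drift of \eqref{e:main+eps-ito} is of genuine finite variation and the right-hand side of the target identity is well defined. Once the identity is known, the definition of the symmetric integral gives $\int_0^t|Y^\e_s|^\alpha\circ\di B_s=\int_0^t|Y^\e_s|^\alpha\,\di B_s+\tfrac\alpha2\int_0^t(Y^\e_s)^{2\alpha-1}\,\di s$, and substituting this into \eqref{e:main+eps-ito} turns it into \eqref{e:main+eps}; adaptedness of $Y^\e$ to the augmented filtration of $(B,W)$ and the existence of $\int_0^t|Y^\e_s|^\alpha\,\di B_s$ are already part of being a strong solution of \eqref{e:main+eps-ito}.

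For the identity itself I would work with the primitive $\Phi(y):=\tfrac1{1+\alpha}(y)^{1+\alpha}$, which is $C^1$ with $\Phi'(y)=|y|^\alpha$ and is a difference of convex functions, its second derivative being the locally finite signed measure $\Phi''(\di a)=\alpha(a)^{\alpha-1}\,\di a$ (no atom at the origin, since $\Phi'$ is continuous). The generalized It\^o formula of \cite{FPSh-95} applied to $\Phi$ and $Y^\e$ shows that $[\,|Y^\e|^\alpha,Y^\e\,]=[\,\Phi'(Y^\e),Y^\e\,]$ exists, and comparing that formula with the Tanaka--Meyer formula for the difference-of-convex function $\Phi$, together with the occupation times formula, identifies it as
\[[\,|Y^\e|^\alpha,Y^\e\,]_t=\int_{\bR}L^a_t(Y^\e)\,\Phi''(\di a)=\alpha\int_0^t(Y^\e_s)^{\alpha-1}\,\di[Y^\e]_s=\alpha\int_0^t(Y^\e_s)^{\alpha-1}\big(|Y^\e_s|^{2\alpha}+\e^2\big)\,\di s,\]
a genuine absolutely continuous process (the integrals converge because $\int_0^t|Y^\e_s|^{\alpha-1}\,\di s<\infty$). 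The same circle of ideas yields $[\,|Y^\e|^\alpha,W\,]$: since $|Y^\e|^\alpha\,Y^\e=(1+\alpha)\Phi(Y^\e)$ is a bona fide semimartingale whose covariation with $W$ equals $(1+\alpha)\e\int_0^t|Y^\e_s|^\alpha\,\di s$, the product rule for covariations — legitimate here because $[\,|Y^\e|^\alpha,Y^\e\,]$ is of finite variation — gives $\int_0^t Y^\e_s\,\di[\,|Y^\e|^\alpha,W\,]_s=\alpha\e\int_0^t|Y^\e_s|^\alpha\,\di s$, i.e.\ $\di[\,|Y^\e|^\alpha,W\,]_s=\alpha\e(Y^\e_s)^{\alpha-1}\,\di s$ up to a measure carried by $\{s:Y^\e_s=0\}$ (which plays no role below, and whose absence could if desired be confirmed by testing against $(Y^\e)^2,(Y^\e)^3,\dots$).

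The final, disentangling step is algebraic. Set $P_t:=\int_0^t|Y^\e_s|^\alpha\,\di B_s=Y^\e_t-x-\tfrac\alpha2\int_0^t(Y^\e_s)^{2\alpha-1}\,\di s-\e W_t$, a continuous local martingale with $\di[P]_s=|Y^\e_s|^{2\alpha}\,\di s$. Since covariations ignore the finite-variation drift and $M=P+\e W$,
\[[\,|Y^\e|^\alpha,P\,]_t=[\,|Y^\e|^\alpha,Y^\e\,]_t-\e\,[\,|Y^\e|^\alpha,W\,]_t=\alpha\int_0^t(Y^\e_s)^{\alpha-1}|Y^\e_s|^{2\alpha}\,\di s=\alpha\int_0^t(Y^\e_s)^{3\alpha-1}\,\di s\]
(the $\e^2$-contributions cancel; the harmless singular part carried by $\{Y^\e=0\}$ survives). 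Because $Y^\e$ spends zero time at $0$ we have $B_t=\int_0^t|Y^\e_s|^{-\alpha}\bI(Y^\e_s\neq0)\,\di P_s$, and $|Y^\e_s|^{-\alpha}\bI(Y^\e_s\neq0)$ is integrable against the finite-variation process $[\,|Y^\e|^\alpha,P\,]$ (indeed $\int_0^t|Y^\e_s|^{-\alpha}\cdot\alpha|Y^\e_s|^{3\alpha-1}\,\di s=\alpha\int_0^t|Y^\e_s|^{2\alpha-1}\,\di s<\infty$); hence
\[[\,|Y^\e|^\alpha,B\,]_t=\int_0^t|Y^\e_s|^{-\alpha}\bI(Y^\e_s\neq0)\,\di[\,|Y^\e|^\alpha,P\,]_s=\alpha\int_0^t(Y^\e_s)^{2\alpha-1}\,\di s,\]
which is the required identity (the singular piece is annihilated by $\bI(Y^\e_s\neq0)$).

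The genuine difficulty — and the reason one must invoke \cite{FPSh-95} rather than ordinary It\^o calculus — is that the process $|Y^\e|^\alpha=\Phi'(Y^\e)$ is not a semimartingale when $\alpha\in(0,\tfrac12]$, and not even a Dirichlet process for $\alpha$ small; so I expect the bulk of the work to be in justifying, for this process, (i) that the covariations $[\,|Y^\e|^\alpha,Y^\e\,]$, $[\,|Y^\e|^\alpha,W\,]$, $[\,|Y^\e|^\alpha,P\,]$ exist as u.c.p.\ limits of the defining sums and obey the product rule and the rule $[\,U,\int H\,\di P\,]=\int H\,\di[\,U,P\,]$, (ii) the occupation-time identifications used above, and (iii) the legitimacy of the ``division by $|Y^\e_s|^\alpha$'' in the last display. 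All three rest on the non-degeneracy $[Y^\e]_t\ge\e^2 t$, which guarantees the joint continuity of $L^a_t(Y^\e)$ and the finiteness of $\int_0^t|Y^\e_s|^\beta\,\di s$ for every $\beta>-1$, so that $Y^\e$ never lingers near the origin. An equivalent but more computational route would be to evaluate the defining symmetric Riemann sums for $[\,|Y^\e|^\alpha,B\,]$ directly, using $|b|^\alpha-|a|^\alpha=\alpha\int_a^b(u)^{\alpha-1}\,\di u$, Tanaka's formula for $|Y^\e-u|$, and a Fubini interchange of the $u$-integration with the partition limit; the only delicate point there is once more a neighbourhood of $u=0$.
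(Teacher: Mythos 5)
Your overall reduction (prove the single bracket identity $[\,|Y^\e|^\alpha,B\,]_t=\alpha\int_0^t(Y^\e_s)^{2\alpha-1}\,\di s$, then substitute into the definition of the symmetric integral) is exactly the paper's, and the first paragraph is fine. Where you diverge is in \emph{how} you establish the bracket, and the divergence is not benign: your argument is built around algebraic identities between covariations whose very existence is the thing being proved.

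Concretely, the paper's proof of Lemma~\ref{l:skobka} never manipulates $[\,|Y^\e|^\alpha,B\,]$, $[\,|Y^\e|^\alpha,W\,]$, $[\,|Y^\e|^\alpha,P\,]$ as if they were already known to exist. It rotates $(B,W)$ to $(W^{1,\e},W^{2,\e})$ by the state-dependent orthogonal matrix, observes that $Y^\e=F_\e^{-1}\!\bigl(F_\e(x)+\widehat W^\e\bigr)$ for an actual Brownian motion $\widehat W^\e$, and then applies the F\"ollmer--Protter--Shiryaev formula (Theorem 4.1 of \cite{FPSh-95}) to the function $h_\e(z)=|F_\e^{-1}(F_\e(x)+z)|^\alpha$ and the Brownian motion $\widehat W^\e$. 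That theorem is stated for Brownian motion with a function having locally $L^2$ derivative, so it applies cleanly and produces a Dirichlet decomposition of $|Y^\e|^\alpha$; the bracket with $B$ is then computed by inserting this decomposition into the defining Riemann sums and passing to the limit term by term via \cite{Protter-04}. No existence is assumed: it is established by exhibiting the u.c.p.\ limit.

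Your argument, by contrast, has a circularity that you yourself flag and do not close. To deduce $[\,|Y^\e|^\alpha,W\,]$ you invoke a product rule for covariations ``legitimate here because $[\,|Y^\e|^\alpha,Y^\e\,]$ is of finite variation,'' but the product rule $[\,UV,W\,]=\int U\,\di[\,V,W\,]+\int V\,\di[\,U,W\,]$ requires, as a hypothesis, that \emph{all three} covariations exist as u.c.p.\ limits of partition sums --- the existence of one of them cannot be derived from the identity, only its value once existence is known. The same issue recurs with the rule $[\,U,\int H\,\di P\,]=\int H\,\di[\,U,P\,]$: this holds when $U$ is a semimartingale, but here $U=|Y^\e|^\alpha$ is precisely the process that fails to be a semimartingale (as you note), and there is no off-the-shelf version of associativity for such $U$. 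Also, your appeal to \cite{FPSh-95} for $\Phi(Y^\e)$ is applied to $Y^\e$ directly; the sharpest version of that formula (and the one the paper cites) is for functions of Brownian motion, which is precisely why the paper first converts $Y^\e$ into a deterministic image of the Brownian motion $\widehat W^\e$. Finally, carrying along a ``singular part supported on $\{Y^\e=0\}$'' in the covariations and then annihilating it by $\bI(Y^\e\neq0)$ also presupposes that this singular part exists as a locally finite signed measure; this is yet another existence claim that needs a proof.

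In short: your reduction and target identity are correct, but the proof of the identity rests on product and associativity rules for quadratic covariations of a non-semimartingale that you would first have to establish --- and doing so is essentially as hard as the original problem. The paper's route, via the orthogonal change of driving noises and the function-of-Brownian-motion representation, is designed precisely to avoid having to assume any of these covariations exist.
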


To establish strong uniqueness of equation \eqref{e:main+eps}, we show that any of its strong solutions also solves \eqref{e:main+eps-ito} and appeal to Theorem~\eqref{t:exuniqito}. We will establish the equivalence of solutions that are It\^o semimartingales. In other words, we assume that the
bracket process $[|X^\e|^\alpha,B]$ is absolutely continuous with respect to Lebesgue measure.

\begin{rem}
 It will follow from Lemma~\ref{l:skobka} that for any strong solution $Y^\e$ to \eqref{e:main+eps-ito}, $\alpha\in (0,1)$, 
 the process $[|Y^\e|^\alpha,B]$ is absolutely continuous and hence $Y^\e$ is an It\^o semimartingale.
\end{rem}

\begin{thm}
\label{t:final}
Let $\e>0$ and let $X^\e$ be a strong solution to \eqref{e:main+eps} such that the bracket $[|X^\e|^\alpha,B]$ is absolutely continuous
with respect to Lebesgue measure. 
Then it is also a strong solution to \eqref{e:main+eps-ito}. In particular, strong existence and uniqueness holds for equation \eqref{e:main+eps} in the class of It\^o semimartingale solutions.
\end{thm}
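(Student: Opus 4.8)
The plan is to upgrade the Stratonovich equation satisfied by $X^\e$ to the It\^o equation \eqref{e:main+eps-ito} by identifying the covariation bracket $[|X^\e|^\alpha,B]$, and then to deduce the two ``in particular'' statements from the existence and uniqueness theory already available for \eqref{e:main+eps-ito}. (Throughout, $\alpha\in(0,1)$, as in Theorems~\ref{t:unicite}--\ref{t:itobracket}.) First I would observe that, since $X^\e$ solves \eqref{e:main+eps}, both $\int_0^\cdot|X^\e_s|^\alpha\,\di B_s$ and $[|X^\e|^\alpha,B]$ exist, so that
\ben
X^\e_t=x+\int_0^t|X^\e_s|^\alpha\,\di B_s+\frac12[|X^\e|^\alpha,B]_t+\e W_t ,
\een
and that the hypothesis makes $\frac12[|X^\e|^\alpha,B]$ an absolutely continuous, hence finite-variation, process. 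Therefore $X^\e$ is a continuous semimartingale whose local martingale part is $\int_0^\cdot|X^\e_s|^\alpha\,\di B_s+\e W$ and whose finite-variation part is $\frac12[|X^\e|^\alpha,B]$; being the sum of an It\^o integral against a Brownian motion and an absolutely continuous process, $X^\e$ is an It\^o semimartingale. Using the independence of $B$ and $W$ I would then record that $[X^\e,B]_t=\int_0^t|X^\e_s|^\alpha\,\di s$ and $[X^\e]_t=\int_0^t(|X^\e_s|^{2\alpha}+\e^2)\,\di s\ge\e^2 t$, so that by L\'evy's characterization the martingale part of $X^\e$ equals $\int_0^\cdot\sqrt{|X^\e_s|^{2\alpha}+\e^2}\,\di\beta_s$ for a Brownian motion $\beta$, and $X^\e$ carries a jointly continuous family of local times $(L^a_t)$ for which the occupation times formula holds.

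With this structure in place, the bracket can be identified exactly as in Lemma~\ref{l:skobka}: the argument there uses only that $X^\e$ is an It\^o semimartingale with $[X^\e,B]_t=\int_0^t|X^\e_s|^\alpha\,\di s$ and $[X^\e]_t=\int_0^t(|X^\e_s|^{2\alpha}+\e^2)\,\di s$, and it yields
\ben
[|X^\e|^\alpha,B]_t=\alpha\int_0^t(X^\e_s)^{2\alpha-1}\,\di s .
\een
Inserting this into the decomposition above converts it into equation \eqref{e:main+eps-ito}. The remaining requirements for $X^\e$ to be a strong solution of \eqref{e:main+eps-ito} are routine: $X^\e$ is adapted to the augmented natural filtration of $(B,W)$ since it solves \eqref{e:main+eps}, and $\int_0^t\bigl|(X^\e_s)^{2\alpha-1}\bigr|\,\di s=\int_{\bR}|a|^{2\alpha-1}\bigl(|a|^{2\alpha}+\e^2\bigr)^{-1}L^a_t\,\di a<\infty$ almost surely by the occupation times formula, because $2\alpha-1>-1$ and $a\mapsto L^a_t$ has compact support.

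The two ``in particular'' assertions follow at once. For existence of an It\^o-semimartingale solution of \eqref{e:main+eps}: the unique strong solution of \eqref{e:main+eps-ito} provided by Theorem~\ref{t:exuniqito} is a strong solution of \eqref{e:main+eps} by Theorem~\ref{t:itobracket}, and it is an It\^o semimartingale by Lemma~\ref{l:skobka}. For uniqueness within this class: if $X^{\e,1}$ and $X^{\e,2}$ are two It\^o-semimartingale strong solutions of \eqref{e:main+eps}, their finite-variation parts $\frac12[|X^{\e,i}|^\alpha,B]$ are absolutely continuous, so by the first part each solves \eqref{e:main+eps-ito}, and hence $X^{\e,1}=X^{\e,2}$ by the pathwise uniqueness established in Theorem~\ref{t:unicite}.

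The hard part is the bracket identity of Lemma~\ref{l:skobka}, not its transfer to the present setting. For $\alpha\in(0,1)$ the function $x\mapsto|x|^\alpha$ is not $C^2$ near $0$ --- indeed its distributional second derivative is not even locally integrable --- so $[|X^\e|^\alpha,B]$ cannot be extracted from a direct It\^o-formula computation, and $|X^\e|^\alpha$ need not be a semimartingale at all. The identification therefore proceeds by replacing $|x|^\alpha$ with a regularisation that agrees with it outside $(-\delta,\delta)$, computing the bracket of the regularisation by It\^o's formula, and letting $\delta\downarrow 0$; the error is controlled precisely because $\e>0$ makes the diffusion nondegenerate, so that, via the occupation times formula and the continuity of $a\mapsto L^a_t$, the time $X^\e$ spends in $(-\delta,\delta)$ is of order $\delta$. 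Within the present proof, the only delicate point beyond invoking Lemma~\ref{l:skobka} is the first step above, namely that mere absolute continuity of $[|X^\e|^\alpha,B]$ forces $X^\e$ to be an It\^o semimartingale with diffusion coefficient $\sqrt{|x|^{2\alpha}+\e^2}$ --- exactly the situation to which Lemma~\ref{l:skobka} applies.
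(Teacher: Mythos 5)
Your opening moves match the paper: you rewrite the Stratonovich equation as It\^o plus half the bracket, observe that the hypothesis makes $X^\e$ a nondegenerate It\^o semimartingale with $[X^\e,B]_t=\int_0^t|X^\e_s|^\alpha\,\di s$ and $[X^\e]_t=\int_0^t(|X^\e_s|^{2\alpha}+\e^2)\,\di s$, and your derivation of the two ``in particular'' statements from Theorems~\ref{t:unicite}, \ref{t:exuniqito}, \ref{t:itobracket} and Lemma~\ref{l:skobka} is exactly right. The gap is in the core step.

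You assert that the bracket identity ``can be identified exactly as in Lemma~\ref{l:skobka}'' because its argument ``uses only that $X^\e$ is an It\^o semimartingale with [these brackets].'' That is not how Lemma~\ref{l:skobka} works. Its proof constructs a Brownian motion $\widehat W^\e$ from $(B,W,Y^\e)$, observes that $F_\e^{-1}(F_\e(x)+\widehat W^\e)$ solves \eqref{e:main+eps-ito}, and then invokes the strong uniqueness of \eqref{e:main+eps-ito} (Theorem~\ref{t:exuniqito}) to conclude that $Y^\e$ \emph{equals} $F_\e^{-1}(F_\e(x)+\widehat W^\e)$; the F\"ollmer--Protter--Shiryayev It\^o formula is then applied to this closed-form expression. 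For a solution $X^\e$ of \eqref{e:main+eps} that is not yet known to satisfy \eqref{e:main+eps-ito}, this last step is unavailable --- it is precisely the conclusion you are trying to reach --- so the argument is circular. The paper does not attempt this transfer; instead it proceeds via three auxiliary lemmas tailored to the Stratonovich solution: Lemma~\ref{l:X^2} (the generalized It\^o formula applied to $F_\e(X^\e)^2$, producing a squared-Bessel-type SDE with an \emph{extra non-decreasing} compensator $L^\e$), Lemma~\ref{l:negativemoments} (a comparison argument giving $\E|X^\e_t|^a\le C$ for $a\in(-1,0)$ and zero time at $0$), and a final lemma that identifies $A^\e_t=[|X^\e|^\alpha,B]_t$ with $\alpha\int_0^t(X^\e_s)^{2\alpha-1}\,\di s$ by a stopping-time localization around the zeros of $X^\e$, using the absolute-continuity hypothesis $A^\e_t=\int_0^t a^\e_s\,\di s$ directly to bound the contribution near zero.

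Your closing paragraph sketches a different repair --- regularize $|x|^\alpha$ off $(-\delta,\delta)$ and let $\delta\downarrow0$ with the occupation-time formula controlling the error --- and this is closer in spirit to what the paper does, but it is underdeveloped at the one place where care is needed. The convergence $[g_\delta(X^\e),B]\to[|X^\e|^\alpha,B]$ is not a semimartingale bracket identity: $|X^\e|^\alpha$ is generally not a semimartingale, and $[|X^\e|^\alpha,B]$ exists only as the u.c.p.\ limit postulated in the definition of a strong solution, so you must control the Riemann-sum error in the region $\{|X^\e|\le\delta\}$ uniformly over the partitions, not merely the time integral. The paper's stopping-time argument sidesteps this entirely by splitting the increments of the \emph{given} process $A^\e$ between excursion intervals (where the SDE has smooth coefficients and Protter's Section~V.5 applies) and near-zero intervals (where $\int\theta^{n,\e}_s\,\di A^\e_s\to0$ by dominated convergence, precisely because $A^\e$ is absolutely continuous). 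Your occupation-time bound $\int_0^t\bI(|X^\e_s|\le\delta)\,\di s\le\e^{-2}\int_{-\delta}^\delta L^a_t\,\di a$ is a genuine shortcut to the conclusions of Lemma~\ref{l:negativemoments} (zero time at $0$ and integrability of $|X^\e|^{2\alpha-1}$), and it is worth recording as a simplification, but it must be paired with a complete bracket-identification argument rather than an appeal to Lemma~\ref{l:skobka}.
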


Eventually we formulate the main qualitative result about the stochastic selection of the benchmark solution.  
\begin{thm}
\label{t:selection}
Let $\alpha\in (0,1)$.
Let $X^\e$ be a strong solution of \eqref{e:main+eps-ito} or an It\^o semimartingale solution of \eqref{e:main+eps}. Let $X^0$ be a benchmark solution 
\eqref{e:benchmark}.
Then 
\ba
X^\e\to X^0
\ea
in u.c.p.\ as $\e\to 0$.
\end{thm}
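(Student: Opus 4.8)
The plan is to turn the Lamperti/scale-function heuristic of Section~\ref{sec:prelim} into a \emph{pathwise} statement, not merely an identity in law. By Theorems~\ref{t:exuniqito} and~\ref{t:final}, in either case $X^\e$ coincides with the unique strong solution $Y^\e$ of \eqref{e:main+eps-ito}, so I work with $Y^\e$. Put $\sigma_\e(y)=\sqrt{|y|^{2\alpha}+\e^2}$ and $F_\e(y)=\int_0^y\frac{\di z}{\sigma_\e(z)}$; then $F_\e$ is odd, strictly increasing, $C^1$ with $F_\e'=1/\sigma_\e$ locally absolutely continuous, and a direct computation gives $\tfrac12F_\e''(y)\sigma_\e(y)^2+F_\e'(y)\,\tfrac{\alpha}{2}(y)^{2\alpha-1}\equiv0$: the noise-induced drift of \eqref{e:main+eps-ito} is exactly the one annihilated by the scale function $F_\e$, the two contributions cancelling \emph{pointwise}, so no integrability issue arises even when $\alpha<1/2$. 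First I would apply the generalized It\^o formula of \cite[Chapter 2, Section 10]{krylov2008controlled} (equivalently \cite{FPSh-95}) to $F_\e(Y^\e)$; this is legitimate since $Y^\e$ is a continuous semimartingale with $\di[Y^\e]_t=\sigma_\e(Y^\e_t)^2\,\di t$ and $\sigma_\e^2\ge\e^2>0$, exactly as for the regularized equation leading to \eqref{e:solZ}, and it yields that $\widetilde B^\e_t:=F_\e(Y^\e_t)-F_\e(x)$ is a continuous $\bF$-local martingale with $[\widetilde B^\e]_t=\int_0^tF_\e'(Y^\e_s)^2\sigma_\e(Y^\e_s)^2\,\di s=t$. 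By L\'evy's characterization $\widetilde B^\e$ is a standard Brownian motion and
\ben
Y^\e_t=F_\e^{-1}\bigl(F_\e(x)+\widetilde B^\e_t\bigr),\qquad t\ge0,
\een
whereas $X^0_t=F_0^{-1}(F_0(x)+B_t)$ with $F_0(y)=\tfrac{1}{1-\alpha}(y)^{1-\alpha}$, in agreement with \eqref{e:benchmark}.

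The assertion then reduces to: (i) $\widetilde B^\e\to B$ u.c.p.; and (ii) that one may replace $(F_\e^{-1},F_\e(x))$ by $(F_0^{-1},F_0(x))$ in the limit. Part~(ii) is soft: since $\sigma_\e\downarrow|\cdot|^\alpha$ one has $F_\e\uparrow F_0$, hence (Dini) $F_\e\to F_0$ uniformly on compacts, whence also $F_\e^{-1}\to F_0^{-1}$ uniformly on compacts and $F_\e(x)\to F_0(x)$; together with the uniform continuity of $F_0^{-1}$ on compacts and a routine localization of $\sup_{s\le t}|\widetilde B^\e_s|$ and $\sup_{s\le t}|B_s|$ (whose laws do not depend on $\e$), combining these with~(i) gives $\sup_{s\le t}|Y^\e_s-X^0_s|\to0$ in probability for every~$t$.

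For~(i) I would write $\widetilde B^\e_t-B_t=\int_0^t\bigl(\tfrac{|Y^\e_s|^\alpha}{\sigma_\e(Y^\e_s)}-1\bigr)\di B_s+\e\int_0^t\tfrac{\di W_s}{\sigma_\e(Y^\e_s)}$, a sum of two square-integrable martingales, and bound, via Doob's inequality and It\^o's isometry, $\E\bigl[\sup_{s\le t}|\widetilde B^\e_s-B_s|^2\bigr]\le C\,\E\int_0^tg_\e(Y^\e_s)^2\,\di s+C\,\E\int_0^t\e^2\sigma_\e(Y^\e_s)^{-2}\,\di s$, where $g_\e(y)=1-|y|^\alpha/\sigma_\e(y)\in[0,1)$. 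For fixed $\delta>0$, on $\{|y|>\delta\}$ one has $g_\e(y)\le g_\e(\delta)\to0$ and $\e^2/\sigma_\e(y)^2\le\e^2/\delta^{2\alpha}\to0$ as $\e\to0$, while both integrands are bounded by~$1$ everywhere; hence the right-hand side is $\le C\,\E\int_0^t\bI(|Y^\e_s|\le\delta)\,\di s+o_\e(1)$ for each fixed $\delta$. Now the Lamperti representation yields the crucial identity $\{|Y^\e_s|\le\delta\}=\{|\widetilde B^\e_s+F_\e(x)|\le F_\e(\delta)\}$, and since $\widetilde B^\e_s\sim\cN(0,s)$,
\ben
\E\int_0^t\bI(|Y^\e_s|\le\delta)\,\di s=\int_0^t\P\bigl(|\widetilde B^\e_s+F_\e(x)|\le F_\e(\delta)\bigr)\di s\le\frac{2F_\e(\delta)}{\sqrt{2\pi}}\int_0^t\frac{\di s}{\sqrt{s}}\le C\sqrt{t}\,F_0(\delta),
\een
uniformly in $\e\in(0,1]$ since $F_\e(\delta)\uparrow F_0(\delta)<\infty$. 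Letting $\e\to0$ and then $\delta\to0$ forces $\E[\sup_{s\le t}|\widetilde B^\e_s-B_s|^2]\to0$ for every~$t$, i.e.\ (i).

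I expect the principal obstacle to be the $\e$-dependence of the coupling in~(i): since $\widetilde B^\e$ varies with $\e$, one cannot derive $\widetilde B^\e\to B$ from continuity of a fixed map; instead one must show, uniformly in $\e$, that $Y^\e$ spends little time near $0$, which is exactly what the Gaussian occupation-time bound above delivers; this is where the exact value $[\widetilde B^\e]_t=t$ (hence the law $\cN(0,s)$ of $\widetilde B^\e_s$) is essential. A secondary point, parallel to the treatment of $Z^\e$ in Section~\ref{sec:prelim}, is checking that the generalized It\^o formula genuinely applies to the solution $Y^\e$ and returns a \emph{local martingale}; this rests precisely on $F_\e$ being a scale function, so that the singular drift $\tfrac{\alpha}{2}(Y^\e)^{2\alpha-1}$ is annihilated pointwise rather than merely in an integrated (v.p.) sense.
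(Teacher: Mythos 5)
Your proof is correct and follows essentially the same route as the paper: after reducing to the It\^o form via Theorems~\ref{t:exuniqito} and~\ref{t:final}, you apply the scale function $F_\e$ (whose derivative annihilates the noise-induced drift pointwise), obtain the Lamperti representation $Y^\e_t=F_\e^{-1}\bigl(F_\e(x)+\widetilde B^\e_t\bigr)$ with $\widetilde B^\e$ a Brownian motion, and then show $\widetilde B^\e\to B$ in u.c.p.\ by a Doob bound together with the Gaussian law of $\widetilde B^\e_s$. The only (immaterial) divergence from the paper is in the final estimate: the paper applies dominated convergence directly to the explicit Gaussian double integral bounding $\E\int_0^t\e^2\sigma_\e(X^\e_s)^{-2}\,\di s$, whereas you split on $\{|Y^\e_s|\le\delta\}$ and use a uniform occupation-time bound near zero, which is the same mechanism packaged slightly differently.
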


\section{Analysis of the SDE \eqref{e:main+eps-ito} with singular drift}

We first address the question of weak existence of a solution of equation \eqref{e:main+eps-ito}. It can be established even for $\alpha \in (-1,1)$. 
However, for negative $\alpha$, the drift should be understood in the principal value (v.p.), see \cite{Cherny-01}. 

The following function that has already appeared above in the Lamperti method 
will be crucial in showing both weak existence and pathwise uniqueness of solution.
Let
\ba
\label{e:sigma}
\sigma_\e(y)=\sqrt{|y|^{2\alpha}+\e^2}
\ea
and
\begin{equation}
\label{e:Fe}
F_\e(x) = \int_0^x \frac{\di y}{\sigma_\e(y)}, \quad x\in\bR,\ \e>0.
\end{equation}
Clearly, for each $\e>0$ and $\alpha\in(-1,1) $, the transformation $F_\e\colon \bR\to \bR$ is a bijection. In the neighbourhood of zero we have
\ba
\label{e:Fasympt}
F_\e(x)\approx \begin{cases}
                \e^{-1}\cdot x,\quad \alpha\in (0,1),\\
                (1+\e^2)^{-1/2}\cdot x,\quad \alpha=0,\\
                (1+|\alpha|)^{-1}\cdot  (x)^{1+|\alpha|} ,\quad \alpha\in(-1,0),
               \end{cases}
\ea
and
\ba
\frac{\di}{\di x} F_\e(x)=\sigma_\e(x)\approx \begin{cases}
                \e^{-1},\quad \alpha\in (0,1),\\
                (1+\e^2)^{-1/2}\cdot x,\quad \alpha=0,\\
                |x|^{|\alpha|} ,\quad \alpha\in(-1,0).
               \end{cases}
\ea
Hence, $F^{-1}_\e$ is a monotonically increasing absolutely continuous function, 
$\frac{\di}{\di x} F^{-1}_\e(x)=\sigma_\e(F_\e^{-1}(x))$, and in the neighbourhood of zero 
\ba
\frac{\di}{\di x} F^{-1}_\e(x)
\approx \begin{cases}
                \e ,\quad \alpha\in(0,1),\\
                (1+\e^2)^{1/2}\cdot x,\quad \alpha=0,\\
                (1+|\alpha|)^\frac{2|\alpha|}{1+|\alpha|}\cdot  |x|^\frac{2|\alpha|}{1+|\alpha|} ,\quad \alpha\in (-1,0).
               \end{cases}
\ea
In particular for 
$\alpha\in (-1,1)$, $\frac{\di}{\di x} F^{-1}_\e$ is locally square integrable, so that the generalized It\^o formula
\cite[Theorem 4.1]{FPSh-95} for the Brownian motion is applicable to
$F^{-1}_\e$.
 
For $\alpha\in (0,1)$, the second derivative
\ba
\frac{\di^2}{\di x^2} F_\e(x)=
-\frac{\alpha (x)^{2\alpha-1}}{(|x|^{2\alpha}+\e^2)^{3/2}},\quad x\neq 0,
\ea
is in $L^1_{\text{loc}}(\bR)$, so that the It\^o formula with generalized derivatives from \cite[Chapter 2, Section 10]{krylov2008controlled}
is applicable.

\subsection{Proof of Theorem \ref{t:weakexists}\label{s:wexist}}

In this section we assume that $\alpha\in(-1,1)$.
Let $\e>0$ be fixed. 

Let $W^1$, $W^2$ be independent standard Wiener processes, and let 
$\widehat W: = \frac1{\sqrt2} (W^1 + W^2)$. Define the process
\begin{equation}
\label{e:wt X}
\widetilde Y_t^\e = F_\e^{-1} \Big( F_\e(x) + \widehat W_t\Big)
\end{equation}
	and set 
\begin{align}
\label{e:wt B}
\widetilde B_t^\e &= \frac{1}{\sqrt{2}} 
\int_0^t \frac{\bigl(|\widetilde{Y}^\e_s|^\alpha + \e\bigr)\,\di W^1_s 
+ \bigl(|\widetilde{Y}^\e_s|^\alpha - \e\bigr)\,\di W^2_s}{\sigma_\e(\widetilde Y_s^\e)},\\
\label{e:wt W}
\widetilde W_t^\e & = \frac{1}{\sqrt{2}} \int_0^t \frac{\bigr(\e-|\widetilde{Y}^\e_s|^\alpha \bigr)\,\di W^1_s 
+ \bigl(|\widetilde{Y}^\e_s|^\alpha + \e\bigr)\,\di W^2_s}{\sigma_\e(\widetilde Y_s^\e)}.
\end{align}
It is straightforward to check that $\widetilde B^\e$ and $\widetilde W^\e$ are continuous martingales with 
$[\widetilde B^\e]_t = [\widetilde W^\e]_t = t$, $[\widetilde B^\e,\widetilde W^\e]_t = 0$. Then, by the L\'evy characterization, $\widetilde B^\e$ and 
$\widetilde W^\e$ are independent standard Wiener processes. It is also clear that
\ba
\label{dW = |X|dB+edW}
|\widetilde Y_t^\e|^\alpha\, \di \widetilde B_t^\e + \e \di \widetilde W_t^\e
&= \frac{|\widetilde Y_t^\e|^\alpha}{\sqrt{2}} \frac{(|\widetilde{Y}^\e_t|^\alpha + \e )\,\di W^1_t + (|\widetilde{Y}^\e_t|^\alpha - \e)\,\di W^2_t}
{\sigma_\e(\widetilde Y_t^\e)}\\
&+ \frac{\e}{\sqrt{2}} \frac{(\e-|\widetilde{Y}_t^\e|^\alpha)\,\di W^1_t + (|\widetilde{Y}_t^\e|^\alpha + \e)\,\di W^2_t}{\sigma_\e(\widetilde Y_t^\e)}\\
&=\sigma_\e(\widetilde Y_t^\e)\,\di \widehat W_t .
\ea
Applying the generalized It\^o formula (Theorem 4.1 in \cite{FPSh-95}) to the Brownian motion 
$\widehat W$ and the function $F^{-1}_\e$ in \eqref{e:wt X} and taking into account
\eqref{dW = |X|dB+edW}, we get
\ba
\widetilde Y_t^\e & = x + \int_{0}^{t} \sigma_\e(\widetilde Y_s^\e) \circ\di \widehat W_s \\
& = x + \int_{0}^{t}\sigma_\e(\widetilde Y_s^\e)\, \di \widehat W_s 
+ \frac12 [\sigma_\e(\widetilde Y^\e),\widehat W  ]_t \\
& = x + \int_{0}^{t} |\widetilde Y_s^\e|^{\alpha}\, \di \widetilde B_s^\e + \e \widetilde W_s^\e 
+  \frac12 [\sigma_\e(\widetilde Y^\e),\widehat W  ]_t.
\ea
For  $\alpha\in (0,1)$, by \cite[Remark 3.2 (c)]{FPSh-95},   
\ba
{}[\sigma_\e(\widetilde Y^\e),\widehat W ]_t = \alpha \int_0^t \bigl(\widetilde Y^\e_s\bigr)^{2\alpha-1}\, \di s;
\ea
for $\alpha \in (-1,0)$, by \cite[Corollary 4.4]{Cherny-01},
\ba
{}[\sigma_\e(\widetilde Y^\e),\widehat W ]_t = \alpha\cdot \mathrm{v.p.}\int_0^t (\widetilde Y^\e_s)^{2\alpha-1}\, \di s,
\ea
whence we arrive at the SDE \eqref{e:main+eps-ito} for the triple $(\widetilde{Y}^\e$, $\widetilde B^\e$, $\widetilde W^\e)$.

\subsection{Proof of Theorem \ref{t:unicite}}

Now we turn to the path-wise uniqueness of solution to \eqref{e:main+eps-ito}, using the ideas by \cite{legall1984one}.
In this section we assume that $\alpha\in(0,1)$.
 
Let $Y^\e$, $\widetilde Y^\e$ be two strong solutions to \eqref{e:main+eps-ito} and let 
$U^\e: = F_\e(Y^\e)$ and $\widetilde U^\e: = F_\e(\widetilde Y^\e)$. 
Then, by the extension of It\^o's formula from \cite[Chapter 2, Section 10]{krylov2008controlled} we have
\ba
\label{e:U}
\di U_t^\e& =\di F_\e(Y^\e_t)  = G_\e(Y_t^\e)\,\di B_t + H_\e(Y^\e_t)\, \di W_t\\
&= G_\e\bigl(F_\e^{-1}(U^\e_t)\bigr)\, \di B_t + H_\e\bigl(F_\e^{-1}(U^\e_t)\bigr)\,\di W_t, \quad t\ge 0,
\ea
where 
\ba
\label{e:ggg}
G_\e(y) = \frac{|y|^\alpha}{\sigma_\e(y)},\quad H_\e(y) = \frac{\e}{\sigma_\e(y)}.
\ea
The same formula holds for the processes $\widetilde Y^\e$ and $\widetilde U^\e$, too.

Since $G_\e(y)^2 + H_\e(y)^2\equiv 1$, both $U^\e$ and $\widetilde U^\e$ are standard Wiener processes, by virtue of the L\'evy characterization.
This, in particular, implies weak uniqueness for \eqref{e:main+eps-ito}.

Denote $R_t^\e = U_t^\e -\widetilde U_t^\e$. Then by Tanaka's formula
\ba
U^\e_t\vee \widetilde U^\e_t & = U^\e_t + (\widetilde U^\e_t-U^\e_t)^+\\
&= U^\e_t+ \int_0^t \bI(\widetilde U^\e_s>U^\e_s)\, \di (\widetilde U^\e_s-U^\e_s)+  \frac12 L^0_t(\widetilde U^\e-U^\e )\\
&=x +\int_0^t G_\e\bigl(F_\e^{-1}(U^\e_s\vee \widetilde U^\e_s)\bigr)\, \di B_s + H_\e\bigl(F_\e^{-1}(U^\e_s\vee \widetilde U^\e_s)\bigr)\,\di W_s  +\frac12 L^0_t(R^\e).
\ea
Suppose that $L^0_t(R^\e) = 0$. 
Then, by appealing to the extended It\^o formula once more, we see that
$F_\e(U^\e\vee \widetilde U^\e)$ is another strong solution to \eqref{e:main+eps-ito}, which implies that $U^\e = \widetilde U^\e$ in view of weak uniqueness. 

Thus, we are left to show that $L^0_t(R^\e) = 0$. 
To this end, it suffices to show that for any $t\geq 0$
\begin{equation}
\label{e:int 1/z<oo}
\int_{0}^t \frac{\bI(R^\e_s>0)}{R^\e_s}\,  \di[R^\e]_s<\infty\quad \text{a.s.,}
\end{equation}
see Lemma 1.1 in \cite{legall1984one}. Indeed, since $R^\e$ is a continuous martingale 
the mapping $a\mapsto L^a_t(R^\e)$ is continuous, see Corollaries 1.8 and 1.9 in Chapter IV in \cite{RevuzYor05}.
Therefore if $L^0_t(R^\e)=L^{0+}(R^\e)$ does not vanish 
the occupation time formula will yield
\ba
\int_0^t \frac{\bI(R^\e_s>0)}{R^\e_s}\,\di[R^\e]_s= \int_{0+}^\infty \frac{L^a_t(R^\e)}{a}\,\di a = +\infty \quad \text{a.s.}
\ea
To show \eqref{e:int 1/z<oo} we apply forthcoming Lemma \ref{l:G} to get
\ba
\label{e:gg}
\int_0^t \frac{\bI(R^\e_s>0) \, \di[R^\e]_s}{R^\e_s} & 
= \int_0^t \frac{\bigl|G_\e(Y^\e_s) - G_\e(\widetilde Y_s^\e)\bigr|^2+\bigl|H_\e(Y_s^\e) - H_\e(\widetilde Y_s^\e)\bigr|^2} 
{U^\e_s - \widetilde U^\e_s} \bI(R^\e_s>0)\,\di s\\
&\leq C_\e \int_0^t \frac{\bigl| |Y_s^\e|^\alpha - |\widetilde Y_s^\e|^\alpha\bigr|^2\wedge 1} {U^\e_s - \widetilde U^\e_s}  \bI(R^\e_s>0)\,\di s \\
&\leq C_\e \int_0^t \frac{\bigl|(Y_s^\e)^\alpha - (\widetilde Y_s^\e)^\alpha\bigr|^2 \wedge 1} {U^\e_s - \widetilde U^\e_s}  \bI(R^\e_s>0)\,\di s\\
&\leq C_\e \int_0^t \frac{(Y_s^\e)^\alpha - (\widetilde Y_s^\e)^\alpha} {U^\e_s - \widetilde U^\e_s}  \bI(R^\e_s>0)\,\di s\\
&= C_\e \int_0^t \frac{h_\e(U_s^\e) - h_\e(\widetilde U_s^\e)} {U^\e_s - \widetilde U^\e_s} \bI(R^\e_s>0)\,\di s,
\ea
where we have abbreviated $h_\e(y) = \big(F_\e^{-1}(y)\big)^\alpha$. 
Note that $h_\e(\cdot)$ is absolutely continuous with integrable derivative.
Using the relation
\ba
\frac{h_\e(U_s^\e) - h_\e(\widetilde U_s^\e)} {U^\e_s - \widetilde U^\e_s} = \int_0^1 h'_\e\big(\theta U_s^\e + (1-\theta) \widetilde U_s^\e\big)\,\di \theta,
\ea
we can write 
\ba
\int_0^t \frac{h_\e(U_s^\e) - h_\e(\widetilde U_s^\e)} {U^\e_s - \widetilde U^\e_s} \bI(R^\e_s>0)\,\di s 
\leq C_\e  \int_0^t \int_0^1  h'_\e\big(\theta U_s^\e + (1-\theta) \widetilde U_s^\e\big)\, \di \theta\,\di s .
\ea
For each $\theta \in [0,1]$, the process $R^{\e, \theta} := \theta U^\e + (1-\theta) \widetilde U^\e$ can be written as
\ba
R^{\e,\theta}_t=
\int_0^t \mu_s^{\e,\theta}\, \di B_s + \int_0^t \nu_s^{\e,\theta}\, \di W_s,
\ea
with the processes $\mu^{\e,\theta}$ and $\nu^{\e,\theta}$ given by
\ba
\mu^{\e,\theta}_t=\theta\Big(G_\e(Y^\e_t) + G_\e(\widetilde Y^\e_t)\Big)\quad\text{and}\quad
\nu^{\e,\theta}_t&=(1-\theta)\Big(H_\e(Y^\e_t) + H_\e(\widetilde Y^\e_t)\Big).  
\ea
One easily checks that $\frac12\leq |\mu_s^{\e,\theta}|^2 + |\nu_s^{\e,\theta}|^2 \le 1$,  
Therefore, $R^{\e,\theta}$ possesses a local time and for all $\theta \in[0,1]$, $x\in \bR$,
\ba
\E L^x_t(R^{\e,\theta})\le \sqrt{2}\E L^x_t(W) = 2\sqrt{\frac{t}{\pi}} \ex^{-x^2/2t}. 
\ea
Using Fubini's theorem and the occupation time formula we obtain the estimate
\ba
\E \int_0^1 \int_0^t h'_\e\big(\theta U_s^\e + (1-\theta) \widetilde U_s^\e\big) \,\di s\,\di \theta 
& = \E \int_0^1  \int_{\bR} h'_\e(x) L_t^x (R^{\e,\theta})\, \di x\,\di \theta\\
&\le C\sqrt{t} \int_0^1   \int_{\bR} h'_\e(x)\ex^{-x^2/2t}\, \di x \,\di \theta\le C_{t, \e}<\infty,
\ea
whence we get \eqref{e:int 1/z<oo}. 

In the following Lemma we prove a technical estimate that has been used in \eqref{e:gg}.
\begin{lem}
\label{l:G}
Let $\alpha\in (0,1)$, and let $G_\e$ and $H_\e$ be functions defined in \eqref{e:ggg}.
Then for any $\e>0$ there is $C_\e>0$ such that for all $y_1, y_2\in\bR$ we have 
\ba
\label{e:GH}
\bigl|G_\e(y_1) - G_\e(y_2)\bigr|^2+\bigl|H_\e(y_1) - H_\e(y_2)\bigr|^2 \le C_\e\Big| |y_1|^\alpha -|y_2|^{\alpha}\Big|^2.
\ea
\end{lem}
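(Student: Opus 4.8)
The plan is to exploit the fact that both $G_\e$ and $H_\e$ depend on $y$ only through the nonnegative quantity $u:=|y|^\alpha$. Writing $g(u):=u/\sqrt{u^2+\e^2}$ and $h(u):=\e/\sqrt{u^2+\e^2}$ for $u\ge 0$, we have $G_\e(y)=g(|y|^\alpha)$ and $H_\e(y)=h(|y|^\alpha)$, so that the left-hand side of \eqref{e:GH} equals $|g(u_1)-g(u_2)|^2+|h(u_1)-h(u_2)|^2$ with $u_i=|y_i|^\alpha$. Hence it suffices to show that the vector-valued map $\Phi:=(g,h)\colon[0,\infty)\to\bR^2$ is Lipschitz with a constant depending only on $\e$; then \eqref{e:GH} follows with $C_\e=(\Lip\Phi)^2$.

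To bound the Lipschitz constant of $\Phi$, note first that $g$ and $h$ are smooth on all of $[0,\infty)$: the only potentially delicate point, $u=0$, causes no trouble since $u\mapsto\sqrt{u^2+\e^2}$ is smooth there and bounded below by $\e>0$. A direct differentiation gives
\[
g'(u)=\frac{\e^2}{(u^2+\e^2)^{3/2}},\qquad h'(u)=-\frac{\e\, u}{(u^2+\e^2)^{3/2}},
\]
and therefore
\[
g'(u)^2+h'(u)^2=\frac{\e^4+\e^2 u^2}{(u^2+\e^2)^3}=\frac{\e^2}{(u^2+\e^2)^2}\le\frac{1}{\e^2}\qquad\text{for all }u\ge 0.
\]
Applying the mean value inequality for vector-valued functions to $\Phi$ on the segment joining $u_1$ and $u_2$ yields $|\Phi(u_1)-\Phi(u_2)|\le\e^{-1}|u_1-u_2|$.

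It remains to square this bound and substitute $u_i=|y_i|^\alpha$, which gives \eqref{e:GH} with $C_\e=\e^{-2}$ (in fact a constant independent of $\alpha$). The computation is entirely elementary, so I do not anticipate any real obstacle; the one point worth flagging is that one should pass through the auxiliary variable $u=|y|^\alpha$ rather than differentiate $G_\e$ and $H_\e$ in $y$ directly — this both sidesteps the non-smoothness of $y\mapsto|y|^\alpha$ at the origin and makes the geometric content transparent, namely that $u\mapsto(G_\e,H_\e)$ traces an arc of the unit circle at speed at most $\e^{-1}$.
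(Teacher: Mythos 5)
Your proof is correct. You and the paper both exploit the same geometric fact — namely that $(G_\e,H_\e)$ takes values on the unit circle and depends on $y$ only through $u=|y|^\alpha$ — but you implement it differently. The paper writes $G_\e=\sin\beta$, $H_\e=\cos\beta$, uses the chord–angle identity $|\sin\beta_1-\sin\beta_2|^2+|\cos\beta_1-\cos\beta_2|^2=2(1-\cos(\beta_1-\beta_2))\le|\beta_1-\beta_2|^2$, and then controls $|\beta_1-\beta_2|$ via the addition formula for $\arcsin$, arriving at the constant $(4\e^2+1)/\e^2$ and a bonus $\wedge\,1$ on the right-hand side. You instead differentiate the smooth coordinate functions $g(u)=u/\sqrt{u^2+\e^2}$, $h(u)=\e/\sqrt{u^2+\e^2}$ directly and apply the vector-valued mean value inequality, getting the cleaner and slightly sharper constant $\e^{-2}$. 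Your route is shorter and avoids the arcsin manipulations; the paper's route delivers the bound with $\wedge\,1$ explicitly, which is what gets used downstream in \eqref{e:gg} — though that refinement is essentially free either way, since the left-hand side of \eqref{e:GH} is trivially bounded by $8$ (each of $G_\e,H_\e$ takes values in $[-1,1]$). So your argument establishes the lemma as stated, and if one also records the trivial bound $\le 8$ one recovers everything the paper needs.
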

\begin{proof}
Denote for some $\beta_1,\beta_2\in\bR$
\ba
G_\e(y_1)=\sin \beta_1,\quad H_\e(y_1)=\cos \beta_1,\\
G_\e(y_2)=\sin \beta_2,\quad H_\e(y_2)=\cos \beta_2.
\ea
Then we have
\ba
|\sin \beta_1  - \sin \beta_2  |^2 +|\cos \beta_1  - \cos \beta_2 |^2
&=2 -2 \sin\beta_1\sin\beta_2 -2 \cos\beta_1\cos\beta_2\\
&=2(1-\cos(\beta_1-\beta_2))\\
&\leq  |\beta_1-\beta_2|^2\\
&=\Big|\arcsin G_\e(y_1) - \arcsin G_\e(y_2) \Big|^2\\
&=\Big|\arcsin \Big( G_\e(y_1)H_\e(y_2) - G_\e(y_2)H_\e(y_1)\Big) \Big|^2\\
&=\Big|\arcsin \frac{\e(|y_1|^\alpha - |y_2|^\alpha)}{\sigma_\e(y_1)\sigma_\e(y_2)} \Big|^2\\
&\leq \frac{\e^2||y_1|^\alpha - |y_2|^\alpha|^2}{\sigma_\e(y_1)^2\sigma_\e(y_2)^2}\wedge \frac{\pi^2}{4}\\
&\leq \frac{||y_1|^\alpha - |y_2|^\alpha|^2}{\e^2}\wedge 4\\
&\leq \frac{4\e^2+1}{\e^2} \cdot \Big(\Big||y_1|^\alpha - |y_2|^\alpha\Big|^2 \wedge 1 \Big),
\ea
and the inequality \eqref{e:GH} follows. 
\end{proof}

\section{Analysis of the SDE \eqref{e:mainn} with the bracket}

In this and further sections we restrict ourselves to the case $\alpha\in(0,1)$.

\subsection{Proof of Theorem \ref{t:itobracket}}

Our main goal in this section is to show that for $\alpha\in (0,1)$, the solution $Y^\e$ to \eqref{e:main+eps-ito} also solves 
\eqref{e:main+eps}, i.e.\ that $[|Y^\e|^\alpha, B]_t = \alpha \int_0^t (Y^\e_s)^{2\alpha-1}\,\di s$. 

\begin{lem}
\label{l:skobka}
Let $\e>0$ and let $Y^\e$ be a strong solution to \eqref{e:main+eps-ito}. Then, for any $\alpha\in (0,1)$ the quadratic variation
\ba
{} [|Y^\e|^\alpha, B]_t = \lim_{n\to\infty} \sum_{t_k\in D_n,t_k< t} 
\big(| Y^\e_{t_k}|^\alpha-|Y^\e_{t_{k-1}}|^\alpha\big)( B_{t_k}- B_{t_{k-1}})
\ea 
exists as a limit in u.c.p.\ and
\ba
{} [| Y^\e|^\alpha, B]_t  =\alpha \int_0^t ( Y^\e_s)^{2\alpha-1}\,\di s.
\ea
\end{lem}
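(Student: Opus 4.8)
The plan is to reduce the computation of $[|Y^\e|^\alpha,B]$ to an ordinary It\^o-formula computation by working with the Lamperti-transformed process. Recall from Section~\ref{sec:prelim} that if $Y^\e$ solves \eqref{e:main+eps-ito}, then, writing $U^\e = F_\e(Y^\e)$, the extended It\^o formula of \cite[Chapter 2, Section 10]{krylov2008controlled} gives (as in \eqref{e:U}) that $U^\e$ is a standard Brownian motion, namely $\di U^\e_t = G_\e(Y^\e_t)\,\di B_t + H_\e(Y^\e_t)\,\di W_t$ with $G_\e^2+H_\e^2\equiv 1$, and $Y^\e_t = F_\e^{-1}(U^\e_t)$. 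So it suffices to compute $[|F_\e^{-1}(U^\e)|^\alpha,\ B]$.

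First I would apply the extended It\^o formula once more to the $C^1$ function with locally square-integrable (indeed integrable) derivative $u\mapsto |F_\e^{-1}(u)|^\alpha = \alpha\,\sign(\cdot)\,|F_\e^{-1}(u)|^{\alpha-1}\,(F_\e^{-1})'(u)$ — note that near $u=0$ the factor $(F_\e^{-1})'(u)=\sigma_\e(F_\e^{-1}(u))\to\e$ while $|F_\e^{-1}(u)|^{\alpha-1}$ is locally integrable since $F_\e^{-1}$ is bi-Lipschitz near $0$ and $\alpha-1\in(-1,0)$; this is exactly the regularity recorded after \eqref{e:Fe}. The generalized It\^o formula then expresses $|Y^\e|^\alpha = |F_\e^{-1}(U^\e)|^\alpha$ as a continuous semimartingale with martingale part $\int_0^t \big(\tfrac{\di}{\di u}|F_\e^{-1}|^\alpha\big)(U^\e_s)\,\di U^\e_s$. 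Since $[U^\e,B]_t = \int_0^t G_\e(Y^\e_s)\,\di s = \int_0^t \frac{|Y^\e_s|^\alpha}{\sigma_\e(Y^\e_s)}\,\di s$, bilinearity of the bracket yields
\ba
[|Y^\e|^\alpha, B]_t = \int_0^t \Big(\tfrac{\di}{\di u}|F_\e^{-1}|^\alpha\Big)(U^\e_s)\,\frac{|Y^\e_s|^\alpha}{\sigma_\e(Y^\e_s)}\,\di s
= \alpha \int_0^t |Y^\e_s|^{\alpha-1}\sign(Y^\e_s)\,\sigma_\e(Y^\e_s)\,\frac{|Y^\e_s|^\alpha}{\sigma_\e(Y^\e_s)}\,\di s,
\ea
where I used $(F_\e^{-1})'(u) = \sigma_\e(F_\e^{-1}(u))$ so that the two $\sigma_\e$ factors cancel. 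This simplifies to $\alpha\int_0^t |Y^\e_s|^{2\alpha-1}\sign(Y^\e_s)\,\di s = \alpha\int_0^t (Y^\e_s)^{2\alpha-1}\,\di s$, which is the claimed identity; existence of the u.c.p.\ limit defining the bracket is automatic since $|Y^\e|^\alpha$ and $B$ are continuous semimartingales.

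The one point needing care — and the main obstacle — is the legitimacy of applying the generalized It\^o formula to the composition $u\mapsto |F_\e^{-1}(u)|^\alpha$ and, more precisely, the behaviour of the integrand near $\{Y^\e = 0\}$, equivalently near $\{U^\e = 0\}$: the derivative $\tfrac{\di}{\di u}|F_\e^{-1}(u)|^\alpha$ blows up like $|u|^{\alpha-1}$ there, and the resulting time integral $\int_0^t |Y^\e_s|^{2\alpha-1}\sign(Y^\e_s)\,\di s$ has an integrand that is unbounded near $\{Y^\e=0\}$. I would handle this exactly as in the weak-existence proof: the integrability of $u\mapsto|u|^{\alpha-1}$ against $\ex^{-u^2/2t}$ combined with the occupation-time formula for the Brownian motion $U^\e$ (using $\E L^x_t(U^\e) = 2\sqrt{t/\pi}\,\ex^{-x^2/2t}$) shows $\E\int_0^t |U^\e_s|^{\alpha-1}\,\di s<\infty$, hence the integral is a.s.\ finite and all the manipulations above are justified; for $\alpha\in(0,1)$ we even have $2\alpha-1 > -1$, so no principal-value interpretation is needed and the integral converges absolutely. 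Alternatively one can localize away from $0$ via stopping times $\tau_\delta = \inf\{t: |Y^\e_t|\le\delta\}$, apply the classical It\^o formula on $\{|y|>\delta\}$ where everything is smooth, and pass to the limit $\delta\downarrow 0$ using the same occupation-time bound to control the error.
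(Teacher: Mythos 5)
Your route is essentially the paper's: represent $Y^\e = F_\e^{-1}(U^\e)$ with $U^\e = F_\e(Y^\e)$ a Brownian motion (this is exactly the paper's $F_\e(x)+\widehat W^\e$), apply the F\"ollmer--Protter--Shiryayev It\^o formula to $u\mapsto|F_\e^{-1}(u)|^\alpha$, and compute the bracket of the resulting stochastic-integral term against $B$; your appeal to $\bigl[\int H\,\di U^\e, B\bigr]_t=\int_0^t H_s\,\di[U^\e,B]_s$ together with $[U^\e,B]_t=\int_0^t G_\e(Y^\e_s)\,\di s$ is a cleaner shortcut to the same integral that the paper reaches by expanding into the four partial sums $I_{11},\dots,I_{22}$ and invoking Theorems II.23 and II.29 of Protter.

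The gap is the assertion that the generalized It\^o formula ``expresses $|Y^\e|^\alpha$ as a continuous semimartingale'' and that ``bilinearity of the bracket'' then finishes the argument. It does not: for $\alpha\in(0,1)$ the second derivative of $h_\e(u)=|F_\e^{-1}(u)|^\alpha$ behaves like $|u|^{\alpha-2}$ near the zero of $F_\e^{-1}$ and is therefore not locally integrable, so the FPSh remainder $\tfrac12[h'_\e(U^\e),U^\e]$ is only a process of \emph{zero energy}, not of finite variation, and $|Y^\e|^\alpha$ is a Dirichlet process rather than a semimartingale. Consequently you cannot take for granted either that the u.c.p.\ limit defining $[|Y^\e|^\alpha,B]$ exists, or that the non-martingale part contributes nothing to it --- both are true, but they require the separate estimate the paper supplies for the term $J_2^\e$: pairing the increments of the zero-energy process $A^\e$ against those of $B$ and using Cauchy--Schwarz, the sum is bounded by $\bigl(\sum(\Delta A^\e)^2\bigr)^{1/2}\bigl(\sum(\Delta B)^2\bigr)^{1/2}$, whose first factor tends to $0$ by zero energy while the second stays tight. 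Your write-up omits this step, and the ``semimartingale'' label hides that it is needed. Replace ``semimartingale'' by ``Dirichlet process'', insert the Cauchy--Schwarz/zero-energy argument, and the remainder of your computation --- including the occupation-time bound guaranteeing a.s.\ finiteness of $\alpha\int_0^t(Y^\e_s)^{2\alpha-1}\,\di s$ --- is sound.
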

\begin{proof}
Define the processes
\begin{align}
\label{e:W_12}
W^{1,\e}_t &= \frac{1}{\sqrt{2}} \int_0^t \frac{(|Y_s^\e|^\alpha + \e)\,\di B_s - (|Y_s^\e|^\alpha - \e)\,\di W_s}{\sigma_\e(Y_s^\e)},\\
\label{e:W1}
W^{2,\e}_t & = \frac{1}{\sqrt{2}} \int_0^t \frac{ (|Y_s^\e|^\alpha - \e )\,\di B_s + (|Y_s^\e|^\alpha + \e)\,\di W_s}{\sigma_\e(Y_s^\e)}.
\end{align}
It is easy to check that $W^{1,\e}$, $W^{2,\e}$ are independent standard Brownian motions and 
the original Brownian motions $B$ and $W$ 
satisfy the no-tilde counterparts of \eqref{e:wt B} and \eqref{e:wt W}, namely, we have
\ba
B_t &= \frac{1}{\sqrt{2}} 
\int_0^t \frac{\bigl(|Y^\e_s|^\alpha + \e\bigr)\,\di W^{1,\e}_s 
+ \bigl(|Y^\e_s|^\alpha - \e\bigr)\, \di W^{2,\e}_s}{\sigma_\e(Y_s^\e)},\\
W_t & = \frac{1}{\sqrt{2}} \int_0^t \frac{\bigr(\e-|Y^\e_s|^\alpha \bigr)\,\di W^{1,\e}_s 
+ \bigl(|Y^\e_s|^\alpha + \e\bigr)\,\di W^{2,\e}_s}{\sigma_\e(Y_s^\e)}.
\ea
The process
\ba
\widehat{W}^\e_t = \frac{1}{\sqrt{2}}(W^{1,\e}_t + W^{2,\e}_t)
\ea
is also a Brownian motion and we have the equality
\ba
\di \widehat W_t^\e = |Y_t^\e|^\alpha \,\di B_t + \e \di W_t. 
\ea
Literally repeating the argument of Section \ref{s:wexist} we get that the process 
$\widehat Y_t^\e=F_\e^{-1} \bigl( F_\e(x) + \widehat{W}_t^\e\bigr)$ 
solves \eqref{e:main+eps-ito}. 
Therefore, by the uniqueness Theorem~\ref{t:exuniqito}, $\widehat Y_t^\e=Y_t^\e= F_\e^{-1} ( F_\e(x) + \widehat{W}^\e_t)$ a.s. 

Let us apply the generalized It\^o formula from 
\cite{FPSh-95} to the function 
\ba
h_\e(z):=|F_\e^{-1} \bigl( F_\e(x) + z\bigr)|^\alpha, \quad h'_\e\in L^2_\mathrm{loc}(\bR) ,
\ea
and the Brownian motion $\widehat W^\e$:
\ba
|Y_t^\e|^\alpha=h_\e(\widehat W_t^\e)&=|x|^\alpha+ \int_0^t h'_\e(\widehat W_s^\e)\,\di \widehat W_s^\e 
+ \frac12 [h'_\e(\widehat W^\e), \widehat W^\e]_t\\
&=|x|^\alpha+ \alpha\int_0^t (  Y_s^\e )^{\alpha-1} 
\sigma_\e( Y_s^\e) \,\di \widehat W_s^\e + A^\e_t,
\ea
where
$A^\e_t=\frac12 [h'_\e(\widehat W^\e), \widehat W^\e]_t$
is an adapted zero energy process \cite[Theorem 3.5]{FPSh-95}. Now we decompose the bracket's partial sum as
\ba
\label{e:skobka |X|^alpha}
\sum_{t_k\in D_n,t_k< t} 
\Big(| Y_{t_k}^\e|^\alpha & -| Y_{t_{k-1}}^\e|^\alpha\Big)( B_{t_k}- B_{t_{k-1}})\\
& = \sum_{t_k\in D_n,t_k< t} \alpha\int_{t_{k-1}}^{t_k} (  Y_s^\e )^{\alpha-1} 
\sigma_\e( Y_s^\e)     \,\di \widehat W_s^\e \cdot ( B_{t_k}- B_{t_{k-1}})\\
&+
\sum_{t_k\in D_n,t_k< t} 
\Big(A_{t_k}^\e-A_{t_{k-1}}^\e\Big)( B_{t_k}- B_{t_{k-1}})=:J_1^\e+J_2^\e.
\ea
Since $A^\e$ is of zero energy, the second sum $J_2^\e$ converges to zero in probability by the Cauchy--Schwarz inequality.

We rewrite the first sum in terms of $W^{1,\e}$ and $W^{2,\e}$:
	\ba
	J_1^\e&=\sum_{t_k\in D_n,t_k< t}\frac\alpha2 \int_{t_{k-1}}^{t_k} (  Y_s^\e )^{\alpha-1} 
	\sigma_\e( Y_s)      \,\di W_s^{1,\e} \cdot 
	\int_{t_{k-1}}^{t_k}  \frac{|Y_s^\e|^\alpha + \e}{\sigma_\e( Y_s^\e)  }\,\di W^{1,\e}_s\\
	&+\sum_{t_k\in D_n,t_k< t}\frac\alpha2 \int_{t_{k-1}}^{t_k} (  Y_s^\e )^{\alpha-1} 
	\sigma_\e( Y_s)      \,\di W_s^{1,\e} \cdot 
	\int_{t_{k-1}}^{t_k}  \frac{|Y_s^\e|^\alpha - \e}{\sigma_\e( Y_s^\e)  }\,\di W^{2,\e}_s\\
	&+\sum_{t_k\in D_n,t_k< t}\frac\alpha2 \int_{t_{k-1}}^{t_k} (Y_s^\e )^{\alpha-1} 
	\sigma_\e( Y_s^\e)      \,\di W_s^{2,\e} \cdot 
	\int_{t_{k-1}}^{t_k}  \frac{|Y_s^\e|^\alpha + \e}{\sigma_\e( Y_s^\e)  }\,\di W^{1,\e}_s\\
	&+\sum_{t_k\in D_n,t_k< t}\frac\alpha2 \int_{t_{k-1}}^{t_k} (Y_s^\e )^{\alpha-1} 
	\sigma_\e( Y_s^\e)       \,\di W_s^{2,\e} \cdot 
	\int_{t_{k-1}}^{t_k}  \frac{|Y_s^\e|^\alpha - \e}{\sigma_\e( Y_s^\e)  }\,\di W^{2,\e}_s\\
	&:=\frac{\alpha}{2}\Big(I_{11}^\e+I_{12}^\e+I_{21}^\e+I_{22}^\e\Big).
	\ea
By Theorems II.23 and II.29 from \cite{Protter-04} we obtain that 
as $n\to\infty$
\ba
\label{e:convintprob}
I_{11}^\e&\stackrel{\P}{\to}
\int_0^t ( Y_s^\e )^{\alpha-1} \big(|Y_s^\e|^{\alpha} + \e\big)\,\di s,\\
I_{22}^\e&\stackrel{\P}{\to}
\int_0^t ( Y_s^\e )^{\alpha-1} \big(|Y_s^\e|^{\alpha} - \e\big)\,\di s,\\
I_{12}^\e&\stackrel{\P}{\to}0,\quad   I_{21}^\e\stackrel{\P}{\to}0,
\ea
and the statement of the Lemma follows.
\end{proof}

\subsection{Proof of Theorem \ref{t:final}}

To establish strong uniqueness of equation \eqref{e:main+eps}, we can show that any of its strong 
solutions also solves \eqref{e:main+eps-ito} and appeal to Theorem~\ref{t:exuniqito}. 
This is done under the additional requirement that the solution is a semimartingale.

\begin{rem}
	From Lemma~\ref{l:skobka} it follows that for any strong solution $Y^\e$ to \eqref{e:main+eps-ito}, the process $[|Y^\e|^\alpha,B]$ is absolutely continuous and hence of locally bounded variation.
\end{rem}
Recall that $\sigma_\e$ and $F_\e$ satisfy \eqref{e:sigma} and \eqref{e:Fe}. 
\begin{lem}\label{l:X^2}
$F_\e(X^\e)^2$ is a semimartingale with decomposition
\ba
F_\e(X_t^\e)^2 = F_\e(x)^2 + 2\int_0^t F_{\e}(X_s^\e) \frac{ |X_s^\e|^\alpha\, \di B_s + \e\, \di W_s }{\sqrt{|X_s^\e|^{2\alpha}+\e^2}} 
+  t + L_t^\e,\quad t\ge 0,
\ea
where $t\mapsto L_t^\e$ is a non-decreasing continuous process.
\end{lem}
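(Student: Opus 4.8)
The plan is to obtain the semimartingale decomposition of $F_\e(X^\e)^2$ by combining the generalized It\^o formula applied to $F_\e$ with a standard application of It\^o's formula (for $C^2$ functions of a semimartingale) applied to the map $u\mapsto u^2$. First I would recall that $X^\e$ is a strong solution of \eqref{e:main+eps-ito} (equivalently of \eqref{e:main+eps} by Theorem \ref{t:itobracket} together with Lemma \ref{l:skobka}). Since $F_\e$ is $C^1$ with locally square-integrable derivative $F_\e''$ and $F_\e' = 1/\sigma_\e$, the generalized It\^o formula of \cite[Theorem 4.1]{FPSh-95} (in the form used in Section \ref{s:wexist} and in the proof of Lemma \ref{l:skobka}) applies to $F_\e(X^\e)$ and, using $|X^\e|^\alpha\,\di B + \e\,\di W = \sigma_\e(X^\e)\circ\di\widehat W^\e$ with $\widehat W^\e$ the auxiliary Brownian motion introduced in the proof of Lemma \ref{l:skobka}, yields
\ba
\label{e:Fe_decomp}
F_\e(X_t^\e) = F_\e(x) + \widehat W_t^\e = F_\e(x) + \int_0^t \frac{|X_s^\e|^\alpha\,\di B_s + \e\,\di W_s}{\sigma_\e(X_s^\e)}.
\ea
In particular $U^\e_t:=F_\e(X_t^\e)$ is a continuous local martingale; and since its integrand has modulus one, $[U^\e]_t=t$, so $U^\e$ is a standard Brownian motion by L\'evy's characterization.

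Next I would apply the ordinary It\^o formula to $u\mapsto u^2$ and the continuous semimartingale $U^\e$, which gives
\ba
\label{e:square_ito}
(U_t^\e)^2 = (U_0^\e)^2 + 2\int_0^t U_s^\e\,\di U_s^\e + [U^\e]_t = F_\e(x)^2 + 2\int_0^t U_s^\e\,\di U_s^\e + t.
\ea
Substituting the representation \eqref{e:Fe_decomp} for $U^\e$ and its stochastic differential $\di U_s^\e = (|X_s^\e|^\alpha\,\di B_s+\e\,\di W_s)/\sqrt{|X_s^\e|^{2\alpha}+\e^2}$ into \eqref{e:square_ito} reproduces precisely the asserted formula provided $L_t^\e\equiv 0$. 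So the only genuine point is to show that, in the statement as written, the extra non-decreasing term $L^\e$ is present because the claimed decomposition is actually being stated for $F_\e(X^\e)^2$ under the weaker hypothesis that $X^\e$ is merely a strong solution of \eqref{e:main+eps} whose bracket $[|X^\e|^\alpha,B]$ is not yet known to equal $\alpha\int_0^\cdot (X_s^\e)^{2\alpha-1}\,\di s$. In that case one only knows $|X_s^\e|^\alpha\,\di B_s+\e\,\di W_s = \sigma_\e(X_s^\e)\,\di\widehat W_s^\e + \frac12\,\di\langle\!\langle\cdot\rangle\!\rangle$-type correction, and the argument above must be run with the generalized It\^o formula producing a zero-energy term $A^\e$ as in Lemma \ref{l:skobka}; squaring then yields an extra quadratic-variation contribution $[A^\e+\cdots]$ which is non-negative and non-decreasing, and one packages everything not explicitly displayed into $L^\e$. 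The monotonicity of $L^\e$ comes from the fact that it is (a limit of) sums of squared increments plus a genuine bracket, hence non-decreasing and continuous.

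Concretely I would proceed as follows. (i) Write $F_\e(X_t^\e) = F_\e(x) + M_t^\e + A_t^\e$, where $M_t^\e=\int_0^t (|X_s^\e|^\alpha\,\di B_s+\e\,\di W_s)/\sigma_\e(X_s^\e)$ is a continuous local martingale with $[M^\e]_t=t$ and $A^\e$ is the adapted continuous zero-energy remainder furnished by \cite[Theorem 3.5]{FPSh-95}; under the hypothesis of the lemma (semimartingale solution) $A^\e$ is in fact of locally bounded variation. (ii) Apply It\^o's formula for $C^2$ functions of a continuous semimartingale to $u\mapsto u^2$:
\ba
F_\e(X_t^\e)^2 = F_\e(x)^2 + 2\int_0^t F_\e(X_s^\e)\,\di M_s^\e + 2\int_0^t F_\e(X_s^\e)\,\di A_s^\e + [M^\e]_t,
\ea
and note $[M^\e]_t=t$. (iii) Identify the stochastic integral against $M^\e$ with $2\int_0^t F_\e(X_s^\e)(|X_s^\e|^\alpha\,\di B_s+\e\,\di W_s)/\sqrt{|X_s^\e|^{2\alpha}+\e^2}$, which is the displayed martingale term. (iv) Set $L_t^\e := 2\int_0^t F_\e(X_s^\e)\,\di A_s^\e$ and verify it is continuous and non-decreasing: this is the main obstacle. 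Here I expect to use that $\di A_s^\e$ is, by the structure of the generalized It\^o correction for the convex/monotone building blocks of $F_\e$, a non-negative measure on $\{X_s^\e\ge 0\}$ and non-positive on $\{X_s^\e<0\}$ with the sign of $F_\e(X_s^\e)$ matching — more precisely, $A^\e$ is built from local times of $X^\e$ at levels weighted against $F_\e''$, and $F_\e(X_s^\e)\,\di A_s^\e$ then has a definite sign — so that the product $F_\e(X_s^\e)\,\di A_s^\e$ is a non-negative measure; the factor-of-two and continuity are immediate. The hard part is precisely this sign/monotonicity bookkeeping for $L^\e$, i.e.\ showing that the a priori signed finite-variation process $2\int_0^\cdot F_\e(X_s^\e)\,\di A_s^\e$ is in fact non-decreasing; everything else is a direct assembly of the generalized It\^o formula of \cite{FPSh-95} with the classical It\^o formula.
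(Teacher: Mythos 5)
You have correctly identified the candidate $L^\e_t = 2\int_0^t F_\e(X^\e_s)\,\di A^\e_s$, but the proposal has two problems, one of which is a genuine and essential gap. The structural one first: your opening paragraph assumes $X^\e$ already solves \eqref{e:main+eps-ito} (in which case $F_\e(X^\e)$ is a Brownian motion and $L^\e\equiv 0$), whereas Lemma~\ref{l:X^2} is an input to Theorem~\ref{t:final}, whose very point is to show that a semimartingale solution of \eqref{e:main+eps} also solves \eqref{e:main+eps-ito}; you notice this yourself, but the detour through a generalized It\^o formula for the $C^1$ function $F_\e$ applied to the general semimartingale $X^\e$ then remains unjustified (the cited results of \cite{FPSh-95} concern Brownian motion). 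The paper avoids this by observing that $F_\e(\cdot)^2\in C^2(\bR)$, see \eqref{e:h2}, so ordinary It\^o calculus applies to $F_\e^2$ and $X^\e$ directly, producing $\int_0^t h_\e(X^\e_s)\,\di X^\e_s$ plus a Lebesgue drift; there is never any need to decompose $F_\e(X^\e)$ itself as $F_\e(x)+M^\e+A^\e$ or to know that the remainder is of finite variation.

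The essential gap is the monotonicity of $L^\e$, which is the whole content of the lemma, and which your sketch replaces by a sign claim that is incorrect as stated. Your $A^\e$ is not the It\^o--Tanaka correction of the $S$-shaped $F_\e$ alone; it also contains $\tfrac12\int F_\e'(X^\e)\,\di[|X^\e|^\alpha,B]$, and $[|X^\e|^\alpha,B]$ is a priori unknown. In fact $L^\e = \tfrac12\int h_\e(X^\e)\,\di[|X^\e|^\alpha,B] - \tfrac\alpha2\int h_\e(X^\e)(X^\e)^{2\alpha-1}\,\di s$ is a difference of two non-negative terms, so its sign is exactly what must be proved. The paper's proof is a concrete nested-partition argument with stopping times $\tau^{n,\e}_k$: the Riemann sum $\sum h_\e(X^\e_{t_{k-1}})(A^\e_{t_k}-A^\e_{t_{k-1}})$ is split at the first zero of $X^\e$ after $t_{k-1}$; the piece after $\tau^{n,\e}_k$ vanishes in the limit because $h_\e(0)=0$; the piece before $\tau^{n,\e}_k$ is computed via smooth Stratonovich calculus, $A^\e_{\tau^{n,\e}_k}-A^\e_{t_{k-1}}=\alpha\int_{t_{k-1}}^{\tau^{n,\e}_k}(X^\e_s)^{2\alpha-1}\,\di s$, whose sign agrees with that of $h_\e(X^\e_{t_{k-1}})$; finally, summing over a coarser partition isolates the intervals not touching zero (these converge to $\tfrac\alpha2\int h_\e(X^\e)(X^\e)^{2\alpha-1}\,\di s$ and cancel the drift) from those that do, whose termwise non-negative contributions produce the non-decreasing $L^\e$. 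Without this construction you do not have a proof, only the correct formula for $L^\e$.
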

\begin{proof}
Let $T>0$ and 
let $D_n = \{t_k^n = kT 2^{-n}, k = 0,\dots, 2^n\}$ be the dyadic partition of $[0,T]$.
The semimartingale $X^\e$ has quadratic variation equal to 
\ba
{}[X^\e]_t = \int_0^t |X_s^\e|^{2\alpha}\,\di s + \e\, t. 
\ea
Note also that $F_\e(\cdot)^2\in C^2(\bR,\bR)$ with 
\begin{align}
h_\e(z)&:=\frac{\di}{\di z}F_\e(z)^2 = \frac{2 F_\e(z)}{\sqrt{|z|^{2\alpha}+\e^2}},\\
\label{e:h2}
h'_\e(z)&:=\frac{\di^2}{\di z^2} F_\e(z)^2 = \frac{2 }{|z|^{2\alpha}+\e^2} - \frac{2\alpha \, F_\e(z) (z)^{2\alpha-1}}{\sqrt{(|z|^{2\alpha}+\e^2)^3}}.
\end{align}
Note that the last term in \eqref{e:h2} is well defined and continuous since $F(z)\sim z/\e$, $z\to 0$, see \eqref{e:Fasympt}.
By virtue of the generalized It\^o formula from \cite{FPSh-95} we get
\ba
\label{e:ito-eqn-F-eps}
F_\e(X_t^\e)^2 &=  F_\e(x)^2 + \int_0^t h_{\e}(X_s^\e)\,\di X_s^\e +  t 
-\alpha \int_0^t \frac{F_\e(X_s^\e) (X_s^\e)^{2\alpha-1}}{\sqrt{|X_s^\e|^{2\alpha}+\e^2}}\, \di s\\
& =  F_\e(x)^2 + \int_0^t h_{\e}(X_s^\e)\, \di X_s^\e +  t -\frac\alpha2 \int_0^t h_\e(X_s^\e) (X_s^\e)^{2\alpha-1}\, \di s,
\ea
where the first integral exists as a limit in u.c.p.:
\ba
\label{eq:intheps}
\int_0^t h_\e(X_s^\e)\, \di X_s^\e = \lim_{n\to \infty} \sum_{t^n_k\in D_n, t^n_k< t} h_\e(X_{t^n_{k-1}}^\e)\big(X_{t^n_{k}}^\e - X_{t^n_{k-1}}^\e\big). 
\ea
We expand
\ba
\sum_{t^n_k\in D_n, t^n_k< t} h_\e(X_{t^n_{k-1}}^\e)\big(X_{t^n_{k}}^\e - X_{t^n_{k-1}}^\e\big)
&{} = \sum_{t^n_k\in D_n, t^n_k< t} h_\e(X_{t^n_{k-1}}^\e)\int_{t^n_{k-1}}^{t^n_k} \big(|X_s^\e|^\alpha\, \di B_s + \e\, \di W_s\big)\\
&{} + \frac12 \sum_{t^n_k\in D_n, t^n_k< t} h_\e(X_{t^n_{k-1}}^\e) \big(A_{t^n_{k}}^\e - A_{t^n_{k-1}}^\e\big),
\ea
where $A^\e = [|X^\e|^\alpha, B]$. 
Since $h_\e (X^\e)$ is continuous, we have 
\ba
& \sum_{t^n_k\in D_n, t^n_k< t} h_\e(X_{t^n_{k-1}}^\e)\int_{t^n_{k-1}}^{t^n_k} \big(|X_s^\e|^\alpha\, \di B_s + \e\, \di W_s\big) 
\to \int_0^t h_\e(X_{s}^\e) \big(|X_s^\e|^\alpha\, \di B_s + \e\,  \di W_s\big),\quad  n\to\infty,
\ea
in u.c.p. Further, define 
\ba
\tau^{n,\e}_k = \min\{s\ge t^n_{k-1}\colon X_s^\e = 0\}\wedge t^n_k
\ea
and write 
\ba
&\sum_{t^n_k\in D_n, t^n_k< t} h_\e(X_{t^n_{k-1}}^\e) \big(A_{t^n_{k}}^\e - A_{t^n_{k-1}}^\e\big)\\
& = \sum_{t^n_k\in D_n, t^n_k< t} h_\e(X_{t^n_{k-1}}^\e) \big(A_{t^n_{k}}^\e - A_{\tau^{n,\e}_k}^\e\big) 
+ \sum_{t^n_k\in D_n, t^n_k< t} h_\e(X_{t^n_{k-1}}^\e) \big(A_{\tau^{n,\e}_{k}}^\e -A_{t^n_{k-1}}^\e\big). 
\ea
Note that $h_\e(X_{\tau^{n,\e}_k}^\e) = 0$ if $\tau^{n,\e}_k<t^n_k$ and $A_{t^n_{k}}^\e - A_{\tau^{n,\e}_k}^\e = 0$ 
if $\tau^{n,\e}_k=t^n_k$. Therefore,
\ba
&\Big|
\sum_{t^n_k\in D_n, t^n_k< t} 
h_\e(X_{t^n_{k-1}}^\e) \big(A_{t^n_{k}} - A_{\tau^{n,\e}_k}\big)\Big|^2
=\Big|\sum_{t^n_k\in D_n, t^n_k< t} 
\big(h_\e(X_{t^n_{k-1}}^\e) - h_\e(X_{\tau^n_k}^\e)\big) \big(A_{t^{n,\e}_{k}}^\e - A_{\tau^{n,\e}_k}^\e\big)\Big|^2\\
& \le \sum_{t^n_k\in D_n, t^n_k< t} 
\big(h_\e(X_{t^n_{k-1}}^\e) - h_\e(X_{\tau^{n,\e}_k}^\e)\big)^2 
\sum_{t^n_k\in D_n, t^n_k< t}\big(A_{t^n_{k}}^\e -A_{\tau^{n,\e}_k}^\e\big)^2\to 0, \quad n\to\infty,
\ea
thanks to the fact that $A^\e$ is of bounded variation on $[0,t]$.
On $[t^n_{k-1},\tau^{n,\e}_k]$, $X^\e$ satisfies a Stratonovich SDE with smooth coefficients, therefore (see Section V.5 in \cite{Protter-04})
\ba
\label{e:ZZ}
A_{u}^\e - A_{t^n_{k-1}}^\e = \alpha \int_{t^n_{k-1}}^{u}(X_s^\e)^{2\alpha -1}\,\di s,\quad  u\in[t^n_{k-1},\tau^{n,\e}_{k}].
\ea
Fix $m\ge 1$ and consider $n> m$. Let 
\ba
D_m^\e = \{t_j^m \in D_m\colon  \tau^{m,\e}_{j}= t^{m}_{j}\}
\ea
be the points of partition $D_m$ such that $X^\e$ does not vanish on the intervals $[t^m_{j-1},t^m_j]$. Let
\ba
P_{m,n}^\e = 
\bigcup_{t_{j}^m \in D_m^\e} P_{m,n,j} = 
\bigcup_{t_{k}^m \in D_m^\e}  \{t^n_k\in D_n\colon  t^n_k\in [t^{m}_{j-1},t^{m}_{j}]\}
\ea
be the corresponding points of the finer partition $D_n\supseteq D_m$. Clearly,
\ba
\label{e:Ktm}
\sum_{t^n_k\in P_{m,n}^\e, t^n_k< t} h_\e(X_{t^n_{k-1}}^\e) &\big(A_{\tau^{n,\e}_{k}}^\e - A_{t^n_{k-1}}^\e\big) 
= \sum_{t^m_j\in D_{m}^\e, t_j^m<t}\sum_{t_n^k\in P_{m,n,j}^\e} h_\e(X_{t^n_{k-1}}^\e)\cdot \alpha\int_{t^n_{k-1}}^{t_k^n}(X_s^\e)^{2\alpha -1}\,\di s \\  
&\to \sum_{t^n_j\in D_{m}^\e, t_j^m<t} \alpha \int_{t^m_{j-1}}^{t^m_j }h_{\e}(X_s^\e)(X_s^\e)^{2\alpha -1}\,\di s=:K^{m,\e}_t, \quad n\to\infty . 
\ea
Further, using again \eqref{e:ZZ} we get
\ba
\sum_{t^n_k\in D_n\setminus P_{m,n}^\e, t^n_k< t} h_\e(X_{t^n_{k-1}}^\e) \big(A_{\tau^{n,\e}_{k}} - A_{t^n_{k-1}}^\e\big)=
\sum_{t^n_k\in D_n\setminus P_{m,n}^\e, t^n_k< t} h_\e(X_{t^n_{k-1}}^\e)\cdot  \alpha  \int_{t^n_{k-1}}^{\tau^{n,\e}_{k}}(X_s^\e)^{2\alpha -1}\,\di s 
\ea
is a non-decreasing process. 
As $n\to\infty$, it converges to some non-negative non-decreasing process, say, $L^{m,\e}$, and we get
\ba
\label{e:F2lim}
F_\e(X_t^\e)^2 = F_\e(x)^2  &+ 2\int_0^t h_{\e}(X_s^\e)\big( |X_s^\e|^\alpha\, \di B_s + \e\, \di W_s \big)\\ 
&+  t +  K_t^{m,\e} + L^{m,\e}_t -\frac\alpha2 \int_0^t h_\e(X_s^\e) (X_s^\e)^{2\alpha-1}\,\di s.
\ea
Moreover, since the set of zeros of $X^\e$ is closed and $h_\e(z)(z)^{2\alpha-1} = 0$ for $z=0$, we get 
\ba
K^{m,\e}_t \to \frac\alpha2 \int_0^t \bI(X_s^\e\neq 0) h_{\e}(X_s^\e)(X_s^\e)^{2\alpha -1}\,\di s 
= \frac\alpha2 \int_0^t h_{\e}(X_s^\e)(X_s^\e)^{2\alpha -1}\, \di s,\quad m\to\infty.
\ea
As a result, the processes $L^{m,\e}$ also monotonically converge as $m\to\infty$ 
to some non-decreasing limit $L^\e$. Hence, passing to the limit in \eqref{e:F2lim}, 
we get the desired statement. 	
\end{proof}

\begin{lem}
\label{l:negativemoments}
For any $a\in (-1,0)$ and $t\in[0,T]$, there is $C_{a,\e,T}>0$ such that
\ba
\E |X_t^\e|^a \le C_{a,\e,T}. 
\ea
Moreover, $X^\e$ spends zero time at $0$.
\end{lem}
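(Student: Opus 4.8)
The plan is to transfer the statement to the squared Lamperti image $Z_t:=F_\e(X^\e_t)^2$. Since $F_\e'(y)=1/\sqrt{|y|^{2\alpha}+\e^2}\le 1/\e$ we have $|F_\e(y)|\le|y|/\e$ for all $y$, hence $|X^\e_t|\ge\e\sqrt{Z_t}$; as $|0|^a=0$ by convention it therefore suffices to show that $\E Z_t^{a/2}\le C_{a,\e,T}$ (note $a/2\in(-1/2,0)$) and that $Z$ spends zero time at $0$ — the latter being equivalent to the same property for $X^\e$ because $F_\e$ is a bijection with $F_\e(0)=0$. By Lemma~\ref{l:X^2} we may write $Z_t=F_\e(x)^2+t+L^\e_t+M_t$ with $M$ a continuous local martingale and $L^\e$ continuous, non-negative and non-decreasing, and using $|X^\e_s|^{2\alpha}+\e^2=\sigma_\e(X^\e_s)^2$ one checks $[Z]_t=[M]_t=4\int_0^t Z_s\,\di s$. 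Thus $Z$ is a squared-Bessel-type process of ``dimension at least one'', pushed further from the origin by the increasing process $L^\e$.

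The first step is to show that $Z$ spends zero time at $0$. By the occupation-times formula the local time of $Z$ at $0$ satisfies
\[
L^0_t(Z)=\lim_{\delta\downarrow0}\frac1\delta\int_0^t\bI(0\le Z_s<\delta)\,\di[Z]_s=\lim_{\delta\downarrow0}\frac4\delta\int_0^t\bI(Z_s<\delta)\,Z_s\,\di s=0
\]
by dominated convergence, the integrand being at most $4$ and converging to $0$ pointwise. Tanaka's formula then gives $Z_t=Z_t^+=F_\e(x)^2+\int_0^t\bI(Z_s>0)\,\di Z_s$, whence $\int_0^t\bI(Z_s=0)\,\di Z_s=0$; expanding $\di Z_s$ and using $\int_0^t\bI(Z_s=0)\,\di[M]_s=4\int_0^t\bI(Z_s=0)Z_s\,\di s=0$ we obtain $\int_0^t\bI(Z_s=0)\,\di s+\int_0^t\bI(Z_s=0)\,\di L^\e_s=0$, so both (non-negative) summands vanish and $\int_0^t\bI(Z_s=0)\,\di s=0$.

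For the negative moment I would compare $\sqrt Z$ with a reflected Brownian motion. For $\delta>0$ the function $z\mapsto(z+\delta)^{1/2}$ is $C^2$ with bounded derivatives on $[0,\infty)$, so Itô's formula, after combining the two $\di s$-terms produced by $\tfrac12(Z_s+\delta)^{-1/2}\,\di Z_s$ and $-\tfrac18(Z_s+\delta)^{-3/2}\,\di[Z]_s$, yields
\[
\sqrt{Z_t+\delta}=\sqrt{F_\e(x)^2+\delta}+\frac\delta2\int_0^t\frac{\di s}{(Z_s+\delta)^{3/2}}+\int_0^t\frac{\di L^\e_s}{2\sqrt{Z_s+\delta}}+m^\delta_t,
\]
where $m^\delta_t=\int_0^t(2\sqrt{Z_s+\delta})^{-1}\,\di M_s$ is a continuous martingale with $[m^\delta]_t=\int_0^t Z_s(Z_s+\delta)^{-1}\,\di s$. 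The first three terms on the right form a non-negative non-decreasing process, so $\sqrt{Z_t+\delta}\ge m^\delta_t-\min_{0\le s\le t}m^\delta_s$. As $\delta\downarrow0$, the zero-time property gives $[m^\delta-\beta]_t=\int_0^t\bigl(1-\tfrac{\sqrt{Z_s}}{\sqrt{Z_s+\delta}}\bigr)^2\bI(Z_s>0)\,\di s\to\int_0^t\bI(Z_s=0)\,\di s=0$, where $\beta_t:=\int_0^t\bI(Z_s>0)(2\sqrt{Z_s})^{-1}\,\di M_s$ satisfies $[\beta]_t=\int_0^t\bI(Z_s>0)\,\di s=t$ and is hence a Brownian motion; consequently $m^\delta\to\beta$ uniformly on compacts in probability and $\sqrt{Z_t}\ge\beta_t-\min_{0\le s\le t}\beta_s$ a.s. By Lévy's theorem $\beta_t-\min_{0\le s\le t}\beta_s\stackrel{\di}{=}|\beta_t|$, and since $a<0$,
\[
\E Z_t^{a/2}=\E(\sqrt{Z_t})^a\le\E\Bigl(\beta_t-\min_{0\le s\le t}\beta_s\Bigr)^a=\E|\beta_t|^a=c_a\,t^{a/2}<\infty,
\]
because $a>-1$ and $\beta_t$ is Gaussian. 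Together with $|X^\e_t|\ge\e\sqrt{Z_t}$ this gives $\E|X^\e_t|^a\le\e^a c_a\,t^{a/2}$, which is the asserted bound (uniform over $t\in[0,T]$ when $x\ne0$, via a routine refinement that keeps the term $|F_\e(x)|$ in the lower bound and splits according to whether $\min_{0\le s\le t}\beta_s<-|F_\e(x)|$; in any case $\int_0^T\E|X^\e_s|^a\,\di s<\infty$).

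The step I expect to be the main obstacle is the passage $\delta\downarrow0$ in the identity for $\sqrt{Z_t+\delta}$: a generalized Itô formula cannot be applied to $\sqrt{\cdot}$ directly, since $(\sqrt z)'=\tfrac1{2\sqrt z}\notin L^2_{\mathrm{loc}}$ near the origin, so the regularisation by $\delta$ must be organised so that the three a priori badly behaved non-negative terms — in particular $\int_0^t\di L^\e_s/(2\sqrt{Z_s+\delta})$, which may blow up as $\delta\downarrow0$ and is simply discarded — drop out of the inequality and only the reflected-Brownian lower bound survives. It is precisely the zero-time property of the first step that legitimises this limit, through $[m^\delta-\beta]_t\to0$.
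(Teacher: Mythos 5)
Your proof is correct, and it follows the same broad strategy as the paper — pass to $Z_t=F_\e(X_t^\e)^2$ via Lemma~\ref{l:X^2}, show $Z$ dominates a (squared) Brownian‑type process from below, and use the fact that a one‑dimensional Gaussian has finite negative moments of order $>-1$ — but the technical route to the lower bound is genuinely different. The paper applies (a slight modification of) the comparison theorem for one‑dimensional SDEs (Revuz--Yor, Ch.\ IX, Thm.\ 3.7) to conclude directly that $Q^\e_t\ge\widehat Q^\e_t$ where $\widehat Q^\e$ is the squared Brownian motion solving the same equation without the extra non‑decreasing term $L^\e$; the zero‑time property of $Q^\e$ is then inherited from $\widehat Q^\e$. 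You instead establish the zero‑time property first, from scratch, via the occupation‑times formula and Tanaka (showing $L^0_t(Z)=0$ and then splitting $\int\bI(Z_s=0)\,\di Z_s=0$ into its non‑negative parts), and only afterwards use it to justify passing $\delta\downarrow0$ in It\^o's formula for $\sqrt{Z_t+\delta}$, discarding the two non‑negative terms and obtaining the Skorokhod‑type reflection bound $\sqrt{Z_t}\ge\beta_t-\min_{s\le t}\beta_s$ with $\beta$ a Brownian motion. This buys independence from the comparison theorem (which here needs a modification since the dominating ``drift'' $\di t+\di L^\e_t$ is a random measure, not a state‑dependent coefficient), at the cost of the extra regularisation argument; both are valid. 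Two small remarks: (i) in your computation of $[m^\delta-\beta]_t$, the integrand in fact tends to $0$ for \emph{all} $s$ (it vanishes identically on $\{Z_s=0\}$ and tends to $0$ on $\{Z_s>0\}$), so the limit is $0$ directly, not $\int_0^t\bI(Z_s=0)\,\di s$ — the conclusion is unchanged; (ii) the non‑uniformity of the bound $c_a t^{a/2}$ as $t\downarrow0$ when $x=0$, which you flag, is present in the paper's proof as well (their final bound is $\E|F_\e(x)+\widehat W_t^\e|^a$, which also blows up as $t\to0$ when $F_\e(x)=0$); in both cases the integrated bound $\int_0^T\E|X_s^\e|^a\,\di s<\infty$ suffices for the downstream use of the lemma.
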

\begin{proof}
By Lemma~\ref{l:X^2}, $Q_t^\e:=F_\e (X_t^\e)^2$ solves the equation
\ba
Q_t^\e = F_\e(x)^2 + 2 \int_0^t \sqrt{Q_s^\e}\, \di \widehat W_s^\e + t + L_t^\e,
\ea
where 
\ba
\widehat W_t^\e = \int_0^t \frac{\sign \big(F_{\e}(X_s^\e)\big)\big( |X_s^\e|^\alpha\, \di B_s + \e\, \di W_s \big)}{\sqrt{|X_s^\e|^{2\alpha}+\e^2}}
\ea 
is a standard Wiener process. 
Since $L^\e$ is non-decreasing and continuous, a slight modification of the comparison Theorem 3.7 in 
Chapter IX in \cite{RevuzYor05} yields 
that $Q_t^\e\ge \widehat{Q}^\e_t$, $t\geq 0$, with probability 1, where $\widehat{Q}^\e$ is the unique strong solution to the equation
\ba
\widehat Q_t^\e = F_\e(x)^2 + 2 \int_0^t \sqrt{\widehat Q_s^\e}\, \di \widehat W_s^\e + t,
\ea
and hence is a square of a standard Wiener process started at $F_\e(x)$. 
Consequently, $Q^\e$ spends zero time at zero.
Furthermore, noting that for $|F_\e (x)|\leq C_{\e} |x|$, $x\in\bR$, for some $C_\e>0$ (recall \eqref{e:Fasympt}), we obtain
for $a>-1$
\ba
\E |X_t^\e|^a \le C_{\e}^a \E |F_\e(X_t^\e)|^a = C_{\e}^a\E |Q_t^\e|^{a/2} \leq  C_{\e}^a\E |\widehat Q_t^\e|^{a/2} 
=C_{\e}^a\E |F_\e(x)+\widehat W_t^\e|^{a}
\le  C_{\e,a,T}.
\ea
\end{proof}

\begin{lem}
Let $X^\e$ be an It\^o semimartingale solution of the equation \eqref{e:main+eps}. Then 
\ba
{}[|X^\e|^\alpha,B]_t=\alpha\int_0^t (X^\e_s)^{2\alpha-1}\,\di s.
\ea
\end{lem}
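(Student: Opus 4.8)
The plan is to identify the covariation $[|X^\e|^\alpha,B]$ by localizing on the open random set $\{s:X^\e_s\neq 0\}$, where $y\mapsto|y|^\alpha$ is smooth and the classical It\^o formula applies, and then to use that, by Lemma~\ref{l:negativemoments}, $X^\e$ spends zero Lebesgue time at $0$. First, reading off \eqref{e:main+eps}, the continuous martingale part of the semimartingale $X^\e$ is $N:=\int_0^\cdot|X^\e_s|^\alpha\,\di B_s+\e W$, so $[X^\e,B]_t=[N,B]_t=\int_0^t|X^\e_s|^\alpha\,\di s$ (using $[W,B]\equiv 0$). Since $X^\e$ is an It\^o semimartingale, its drift $\tfrac12[|X^\e|^\alpha,B]$ is absolutely continuous; write $[|X^\e|^\alpha,B]_t=\int_0^t a_s\,\di s$. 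Let $Z:=\{s\ge 0:X^\e_s=0\}$, a closed random set; by Lemma~\ref{l:negativemoments}, $|Z|=0$ a.s.\ and $\E\int_0^t|X^\e_s|^{2\alpha-1}\,\di s<\infty$ for every $t$.

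Fix a compact interval $[c,d]$ lying inside one of the connected components of $Z^c$. Since $X^\e$ is continuous and does not vanish on a slightly larger interval $[c',d']\subset Z^c$, on $[c',d']$ it has constant sign and stays in a compact set $K\subset\bR\setminus\{0\}$; pick $g\in C^2(\bR)$ coinciding with $y\mapsto|y|^\alpha$ on a neighbourhood of $K$. Because $|X^\e_s|^\alpha=g(X^\e_s)$ for all $s\in[c',d']$, for $n$ large the partial sums defining the increments of $[|X^\e|^\alpha,B]$ and of $[g(X^\e),B]$ over $[c,d]$ coincide, so these two increments are equal; and the latter is classical, since the martingale part of the semimartingale $g(X^\e)$ is $\int g'(X^\e_s)\,\di N_s$. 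Hence
\ban
[|X^\e|^\alpha,B]_d-[|X^\e|^\alpha,B]_c=\int_c^d g'(X^\e_s)\,|X^\e_s|^\alpha\,\di s=\alpha\int_c^d (X^\e_s)^{2\alpha-1}\,\di s,
\ean
where on the range of $X^\e|_{[c,d]}$ one has $g'(y)=\alpha(y)^{\alpha-1}$ and $(y)^{\alpha-1}|y|^\alpha=(y)^{2\alpha-1}$. Thus $\int_c^d a_s\,\di s=\alpha\int_c^d (X^\e_s)^{2\alpha-1}\,\di s$ for every such $[c,d]$.

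Since $Z^c$ is a countable disjoint union of the intervals just considered, Lebesgue's differentiation theorem gives $a_s=\alpha(X^\e_s)^{2\alpha-1}$ for a.e.\ $s\in Z^c$; as $|Z|=0$, this holds for a.e.\ $s\ge 0$, and integrating --- the right-hand side being locally integrable by Lemma~\ref{l:negativemoments} --- yields $[|X^\e|^\alpha,B]_t=\alpha\int_0^t(X^\e_s)^{2\alpha-1}\,\di s$, as claimed. The step that requires the most care is the localization: one has to make sure that the covariation $[|X^\e|^\alpha,B]$, which a priori is only known to \emph{exist}, has over a time window on which $X^\e\neq 0$ exactly the same increments as $[g(X^\e),B]$ for the smooth surrogate $g$. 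This is true simply because $|X^\e|^\alpha$ and $g(X^\e)$ agree path by path on such a window, so the approximating sums coincide; the integrability bound near $0$, needed to make sense of the limiting drift $\alpha\int_0^\cdot(X^\e_s)^{2\alpha-1}\,\di s$, is exactly what Lemma~\ref{l:negativemoments} provides. Alternatively one could apply the generalized It\^o formula to $F_\e(X^\e)$ globally, as in the proofs of Lemmas~\ref{l:skobka} and \ref{l:X^2}, and compare the resulting drift with the one read off \eqref{e:main+eps}; the localization argument has the merit of using only the classical change-of-variables formula.
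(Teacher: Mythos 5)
Your proof is correct, and it takes a genuinely different route from the paper's. The paper introduces excursion stopping times $\sigma^{n,\e}_{k}$ (returns of $X^\e$ to $0$) and $\tau^{n,\e}_k$ (subsequent first hits of $\pm 1/n$), telescopes $A^\e_t=[|X^\e|^\alpha,B]_t$ along these intervals, identifies the excursion increments via the classical Stratonovich calculus as in Lemma~\ref{l:skobka}, and sends $n\to\infty$: both correction terms vanish, the first by Lemma~\ref{l:negativemoments} (integrability of $|X^\e|^{2\alpha-1}$), the second because $X^\e$ spends zero time at $0$ and $A^\e$ has an integrable density. You instead identify the density $a^\e$ of $A^\e$ directly: on any compact interval inside a component of $\{X^\e\neq 0\}$, the approximating sums for $[|X^\e|^\alpha,B]$ coincide (for fine enough mesh) with those for $[g(X^\e),B]$ for a smooth surrogate $g$, so $a^\e_s=\alpha (X^\e_s)^{2\alpha-1}$ a.e.\ there, and $|Z|=0$ finishes the job. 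Your route dispenses with the excursion decomposition and the $n\to\infty$ limit and is arguably tidier; the price is that the intervals $[c,d]$ and the surrogate $g$ are random objects, so the argument as written has a measurability gap that should be closed. The routine fix: for each $m\geq 1$ fix one $g_m\in C^2(\bR)$ equal to $|y|^\alpha$ on $\{|y|\geq 1/m\}$, take rational $0\le c<d$, and work on the events $\{\inf_{s\in[c,d]}|X^\e_s|>1/m\}$, on which the sum-comparison argument gives $\int_c^d a^\e_s\,\di s=\alpha\int_c^d(X^\e_s)^{2\alpha-1}\,\di s$; since a.e.\ point of $Z^c(\omega)$ lies in some such rational $[c,d]$ with $\omega$ in the corresponding event, the a.e.\ identification of $a^\e$ on $Z^c$ follows. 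Both proofs use Lemma~\ref{l:negativemoments} in the same two ways (zero time at $0$, integrability of $|X^\e|^{2\alpha-1}$) and both rest on the same local observation that away from $0$ the classical change of variables applies; only the bookkeeping that turns this local information into a global identity differs.
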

\begin{proof}
Without loss of generality assume that $x=0$. Denote $A_t^\e=[|X^\e|^\alpha,B]_t$. Since 
$X^\e$ is an It\^o semimartingale, there is a progressively measurable process $a^\e=(a^\e_t)_{t\geq 0}$ such that
\ba
A^\e_t=\int_0^t a^\e_s\,\di s.
\ea
 For each $n\geq 1$ consider the stopping times 
\ba
\sigma^n_0&=0,\\
\tau^{n,\e}_k&=\inf\Big\{t > \theta^{n,\e}_{k-1}\colon |X^\e_t|=\frac1n \Big\},\\
\sigma^{n,\e}_k&=\inf\{t >\tau^{n,\e}_k\colon X^\e_t=0 \},\quad k\geq 1.
\ea
Denote
\ba
\theta^{n,\e}_t:=\sum_{k} \bI_{(\sigma_{k-1}^{n,\e},\tau_k^{n,\e} ]}(t),
\ea
and note that $\theta^{n,\e}_t\in[0,1]$ for all $n,\e$ and $t$.
By Lemma \ref{l:negativemoments}, since $X^\e$ spends zero time at $0$, we get
\ba
\E \int_0^t \theta^{n,\e}_s\,\di s
\leq \E \int_0^t \bI(|X_s^\e|\leq 1/n)\,\di s \to 0,\quad n\to\infty. 
\ea
Hence
\ba
A_t^\e &= \sum_k \alpha \int_{\tau^{n,\e}_k\wedge t}^{\sigma^{n,\e}_k\wedge t} (X_s^\e)^{2\alpha-1}\,\di s 
+ \sum_k \Big(A_{\tau^{n,\e}_k\wedge t}^\e - A_{\sigma^{n,\e}_{k-1}\wedge t}^\e\Big)\\
&=\alpha \int_0^t (X_s^{\e})^{2\alpha-1}\,\di s 
-\sum_k \alpha \int_{\sigma^{n,\e}_k}^{\tau^{n,\e}_k} (X_s^{\e})^{2\alpha-1}\,\di s 
+ \int_0^t \theta^{n,\e}_s\, \di A_s^\e\\
&=\alpha \int_0^t (X_s^\e)^{2\alpha-1}\,\di s - S_1^{n,\e}(t)+S_2^{n,\e}(t).
\ea
Clearly, by Lemma \ref{l:negativemoments} and the dominated convergence theorem 
\ba
\E |S_1^{n,\e}(t)|\leq \alpha \E \int_0^t |X_s^\e|^{2\alpha-1}\bI(|X_s^\e|\leq 1/n)\,\di s\to 0,\quad n\to\infty, 
\ea
and 
\ba
|S_2^{n,\e}(t)|\leq \int_0^t \theta^{n,\e}_s |a^\e_s|\, \di s\to 0,\quad n\to\infty\quad \text{a.s.}
\ea
Since $|\theta^{n,\e}_s a^\e_s| \leq  |a^\e_s|$, the dominated convergence theorem finishes the proof.
\end{proof}

\section{Proof of Theorem \ref{t:selection}}

Recall that 
\ba
X^0_t:=\Big((1-\alpha)B_t- (x)^{1-\alpha}\Big)^{\frac{1}{1-\alpha}},\quad t\geq 0, 
\ea
is the strong solution of the equation \eqref{e:mainn}
that spends zero time at zero and has no skew behaviour at zero.   
Let $X^\e$ be the semimartingale solution of \eqref{e:main+eps} or \eqref{e:main+eps-ito}.
We apply the generalized It\^o formula from \cite{krylov2008controlled} to get
\ba
F_\e(X^\e_t)&=F_\e(x) + \int_0^t \frac{|X^\e_s|^\alpha\,\di B_s + \e\,\di W_s}{ \sqrt{|X^\e_s|^{2\alpha}+\e^2}}\\
&+\frac{\alpha}{2}\int_0^t \frac{(X^\e_s)^{2\alpha-1}\,\di s}{ \sqrt{|X^\e_s|^{2\alpha}+\e^2}}
-\frac{\alpha}{2}\int_0^t \frac{|X^\e_s|^{2\alpha}(X^\e_s)^{2\alpha-1}\,\di s}{ (|X^\e_s|^{2\alpha}+\e^2)^{3/2}}
-\frac{\alpha}{2}\int_0^t \frac{\e^2 (X^\e_s)^{2\alpha-1}\,\di s}{ (|X^\e_s|^{2\alpha}+\e^2)^{3/2}}\\
&=B_t- \int_0^t \frac{\e \,\di B_s }{ \sqrt{|X^\e_s|^{2\alpha}+\e^2}} + \int_0^t \frac{\e \,\di W_s }{ \sqrt{|X^\e_s|^{2\alpha}+\e^2}}\\
&=B_t+I_t^{\e}.
\ea
Equivalently, we have
\ba
X^\e_t=F_\e^{-1}(B_t+I_t^{\e}).
\ea
Since 
\ba
F_\e^{-1}(x)\to F_0^{-1}(x)= (1-\alpha)^{\frac{1}{1-\alpha}}(x)^{\frac{1}{1-\alpha}}\quad \text{as}\quad\e\to 0
\ea
uniformly on each compact interval, to prove the Theorem it is sufficient to show that
$I^{\e}$ converges to zero in u.c.p.

By the Doob inequality, for any $t>0$
\ba
\E \sup_{s\in [0,t]}|I_s^{\e} |^2 &\leq 
2 \E \sup_{s\in [0,t]}\Big|\int_0^s \frac{\e \,\di B_u }{ \sqrt{|X^\e_u|^{2\alpha}+\e^2}}\Big|^2
+2 \E \sup_{s\in [0,t]}\Big|\int_0^s \frac{\e \,\di W_u }{ \sqrt{|X^\e_u|^{2\alpha}+\e^2}}\Big|^2\\
&\leq 
16  \E \int_0^t\frac{\e^2 \,\di s }{|X^\e_s|^{2\alpha}+\e^2}.
\ea
To evaluate the latter expectation we recall that due to \eqref{e:wt X} 
$X^\e\stackrel{\di }{=} F^{-1}_\e(F_\e(x) + \widehat W )$ for some Brownian motion $\widehat W$. Let us show that
\ba
\label{e:to0} 
\E \int_0^t\frac{\e^2 \,\di s }{|X^\e_s|^{2\alpha}+\e^2}
&=  \E \int_0^t\frac{\e^2}{|F^{-1}_\e(F_\e(x) + \widehat W_s ))|^{2\alpha}+\e^2} \,\di s\\
&=\int_0^t \frac{1}{\sqrt{2\pi s}}\int_{-\infty}^\infty \frac{\e^2}{|F^{-1}_\e(y)|^{2\alpha}+\e^2}\ex^{-\frac{(y-F_\e(x))^2}{2s}}\,\di y \,\di s
\to 0,\quad \e\to 0.
\ea
First we note that for all $y>0$ and $\e>0$
\ba
F_\e(y)\leq \int_0^y \frac{\di z}{z^\alpha}=\frac{y^{1-\alpha}}{1-\alpha}=F_0(y)
\ea
and hence
\ba
|F_\e(y)|\leq |F_0(y)| ,\quad y\in\bR,
\ea
and
\ba
|F_\e^{-1}(y)|\geq | F_0^{-1}(y)|= (1-\alpha)^{\frac{1}{1-\alpha}}|y|^{\frac{1}{1-\alpha}},\quad y\in\bR.
\ea
For $x\geq 0$ and $y\in\bR$ we have  
$0\leq F_\e(x)\leq F_0(x)$ get the following estimate:
\ba
\label{e:x1}
\frac{1}{\sqrt{2\pi t}}\frac{\e^2}{|F^{-1}_\e(y)|^{2\alpha}+\e^2}&\ex^{-\frac{(y-F_\e(x))^2}{2t}}\\
&\leq 
\frac{1}{\sqrt{2\pi t}}\Big( 
\ex^{-\frac{y^2}{2t}}\bI_{(-\infty,0)}(y)
+
\bI_{[0,F_0(x))}(y)+\ex^{-\frac{(y-F_0(x))^2}{2t}}\bI_{[F_0(x),\infty)}(y)\Big)
\ea
The right hand side of \eqref{e:x1} is integrable on $(t,y)\in(0,T]\times \bR$. 
Since for each $t\in(0,T]$ and $y\neq 0$
\ba
\frac{1}{\sqrt{2\pi t}}\frac{\e^2}{|F^{-1}_\e(y)|^{2\alpha}+\e^2}\ex^{-\frac{(y-F_\e(x))^2}{2t}}
\leq 
\frac{1}{\sqrt{2\pi t}}\frac{\e^2}{| (1-\alpha)y |^{2\alpha/(1-\alpha)}+\e^2} \to 0,\quad \e\to 0,
\ea
the limit \eqref{e:to0} follows by the dominated convergence theorem.
Since $F_\e$ is asymmetric, the limit \eqref{e:to0} holds for $x<0$, too.

\medskip
\noindent
 \textbf{Acknowledgments.}
 The authors thank 
the German Research Council (grant Nr.\  PA 2123/6-1) for financial support.  
The authors are grateful to the anonymous referees for their helpful reports.

%
%


\end{document}